\newtheorem{thm}{Theorem}[section]
\newtheorem{lemma}[thm]{Lemma}
\newtheorem{conj}[thm]{Conjecture}
\theoremstyle{definition}
\newtheorem{rem}[thm]{Remark}
\newcommand{\N}{\mathbb{N}}
\newcommand{\Z}{\mathbb{Z}}
\newcommand{\Q}{\mathbb{Q}}
\newcommand{\R}{\mathbb{R}}
\newcommand{\C}{\mathbb{C}}
\newcommand{\I}{\mathbb{I}}
\newcommand{\st}{\;:\;}
\newcommand{\GL}{\mathrm{GL}}
\newcommand{\phit}[1]{\varphi^{#1}_{\mathbf{t}}}
\newcommand{\tempo}{\mathbf{t}}
\newcommand{\zero}{\mathbf{0}}
\newcommand{\sspace}{\cdot}
\newcommand{\opiccolo}[1]{\mathrm{o}\left(\left|#1\right|\right)}
\newcommand{\opiccolouno}{\mathrm{o}\left(1\right)}
\DeclareMathOperator{\im}{i}
\DeclareMathOperator{\imm}{im}
\DeclareMathOperator{\de}{d}
\DeclareMathOperator{\id}{id}
\DeclareMathOperator{\Nij}{Nij}
\DeclareMathOperator{\End}{End}
\DeclareMathOperator{\Span}{span}
\DeclareMathOperator{\Ad}{Ad}
\DeclareMathOperator{\ad}{ad}
\DeclareMathOperator{\rk}{rk}
\newcommand{\del}{\partial}
\newcommand{\delbar}{\overline{\del}}
\newcommand{\delbarstar}{\overline{\del}^*}
\title[The cohomologies of the Iwasawa manifold and deformations]
{The cohomologies of the Iwasawa manifold and of its small deformations}
\author{Daniele Angella}
\address{Dipartimento di Matematica ``Leonida Tonelli''\\
Universit\`{a} di Pisa \\
Largo Bruno Pontecorvo 5, 56127\\ 
Pisa, Italy}
\email{angella@mail.dm.unipi.it}
\keywords{Iwasawa manifold; cohomology; Bott-Chern; solvmanifold; deformations}
\thanks{This work was supported by GNSAGA of INdAM}
\subjclass[2010]{57T15; 53C15; 32G05}
\begin{document}

\vspace{-2cm}
\begin{minipage}[l]{9cm}
{\sffamily
  D. Angella,
  The cohomologies of the Iwasawa manifold and of its small deformations,
  \textsc{doi:} \texttt{10.1007/s12220-011-9291-z},
  to appear in {\itshape J. Geom. Anal.}.

\smallskip

  \begin{flushright}\begin{footnotesize}
  (The final publication is available at \url{www.springerlink.com}.)
  \end{footnotesize}\end{flushright}
}
\end{minipage}
\vspace{2cm}

\begin{abstract}
 We prove that, for some classes of complex nilmanifolds, the Bott-Chern cohomology is completely determined by the Lie algebra associated to the nilmanifold with the induced complex structure.
 We use these tools to compute the Bott-Chern and Aeppli cohomologies of the Iwasawa manifold and of its small deformations, completing the computations in \cite{schweitzer} by M. Schweitzer.
\end{abstract}

\maketitle

\section*{Introduction}

On compact oriented Riemannian manifolds, Hodge theory allows to compute cohomology solving systems of differential equations.
For \emph{nilmanifolds}, namely, compact quotients of connected simply-connected nilpotent Lie groups by co-compact discrete subgroups, K. Nomizu proved in \cite[Theorem 1]{nomizu} that the de Rham complex admits a finite-dimensional subcomplex, defined in Lie theoretic terms, as minimal model.
Furthermore, also the Dolbeault cohomology often reduces to the cohomology of the corresponding finite-dimensional complex of forms on the Lie algebra: this happens, for example, for holomorphically parallelizable complex structures, as proved by Y. Sakane in \cite[Theorem 1]{sakane}, or for rational complex structures, as proved by S. Console and A. Fino in \cite[Theorem 2]{console-fino}; see the surveys \cite{console} and \cite{rollenske-survey} and also \cite{rollenske} for more results in this direction.

Together with the Dolbeault cohomology, the \emph{Bott-Chern cohomology} provides an important tool to study the geometry of complex manifolds: it is the bi-graded algebra defined by
$$ H^{\bullet,\bullet}_{BC}(X) \;:=\; \frac{\ker\del\cap\ker\delbar}{\imm\del\delbar} \;,$$
$X$ being a complex manifold. It turns out that there is a Hodge theory also for the Bott-Chern cohomology (see, e.g., \cite[\S2]{schweitzer}) and therefore, if $X$ is compact, then the $\C$-vector space $H^{\bullet,\bullet}_{BC}(X)$ is finite-dimensional; furthermore, if $X$ is a compact K\"ahler manifold or, more in general, if $X$ is in Fujiki class $\mathcal{C}$, then the Dolbeault and Bott-Chern cohomologies coincide and they give a splitting for the de Rham cohomology algebra.

In this paper, we give some tools to compute the Bott-Chern cohomology ring of certain compact complex homogeneous manifolds.\\
More precisely, let $N=\left.\Gamma\right\backslash G$ be a nilmanifold endowed with a $G$-left-invariant complex structure $J$ and denote the Lie algebra naturally associated to $G$ by $\mathfrak{g}$ and its complexification by $\mathfrak{g}_\C:=\mathfrak{g}\otimes_\R\C$.
Dealing with $G$-left-invariant objects on $N$, we mean objects whose pull-back to $G$ is invariant for the left-action of $G$ on itself.
For any $k\in\N$ and $p,q\in\N$, we denote the space of smooth global sections of the bundle of real $k$-forms (respectively, complex $k$-forms, $(p,q)$-forms) on $N$ by the symbol $\wedge^kN$ (respectively, $\wedge^k(N;\C)$, $\wedge^{p,q}N$).
For any $p,q\in\N$, the $(p,q)$-th Bott-Chern cohomology group $H^{p,q}_{BC}(N)$ of $N$ is computed as the cohomology of the complex
$$
\wedge^{p-1,q-1}N \stackrel{\del\delbar}{\longrightarrow} \wedge^{p,q}N \stackrel{\de}{\longrightarrow} \wedge^{p+q+1}(N;\C) \;.
$$
Restricting to $G$-left-invariant forms $\wedge^{\bullet,\bullet}_\text{inv}N\simeq \wedge^{\bullet,\bullet}\mathfrak{g}_\C^*$ on $N$, one has the subcomplex
\begin{equation}\label{eq:bc-subcomplex}
\begin{gathered}
\xymatrix{
\wedge^{p-1,q-1}\mathfrak{g}_\C^* \ar[r]^{\del\delbar} \ar@{^{(}->}[d] & \wedge^{p,q}\mathfrak{g}_\C^* \ar[r]^{\de} \ar@{^{(}->}[d] & \wedge^{p+q+1}\mathfrak{g}_\C^* \ar@{^{(}->}[d] \\
\wedge^{p-1,q-1}N \ar[r]^{\del\delbar} & \wedge^{p,q}N \ar[r]^{\de} & \wedge^{p+q+1}(N;\C)
}
\end{gathered}
\;.
\end{equation}
We prove the following theorem, which extends the results in \cite[Theorem 1]{sakane}, \cite[Main Theorem]{cordero-fernandez-gray-ugarte-2000}, \cite[Theorem 1, Theorem 2, Remark 4]{console-fino}, \cite[Theorem 1.10]{rollenske}, see also \cite{console, rollenske-survey}, to the Bott-Chern case.

\smallskip
\noindent {\bfseries Theorem} (see Theorem \ref{thm:bc-invariant-cor} and Theorem \ref{thm:bc-invariant-open}){\bfseries .}
{\itshape
 Let $N=\left.\Gamma\right\backslash G$ be a nilmanifold endowed with a $G$-left-invariant complex structure $J$ and denote the Lie algebra naturally associated to $G$ by $\mathfrak{g}$.
Then, for every $p,q\in\N$, the injective homomorphism in cohomology
$$
i\colon\frac{\ker\left(\de\colon\wedge^{p,q}\mathfrak{g}_\C^*\to\wedge^{p+q+1}\mathfrak{g}_\C^*\right)}{\imm\left(\del\delbar\colon\wedge^{p-1,q-1}\mathfrak{g}_\C^*\to\wedge^{p,q}\mathfrak{g}_\C^*\right)} \; \hookrightarrow \; H^{p,q}_{BC}(N)
$$
induced by \eqref{eq:bc-subcomplex} is an isomorphism, provided one of the following conditions holds:
\begin{itemize}
 \item $N$ is holomorphically parallelizable;
 \item $J$ is an Abelian complex structure;
 \item $J$ is a nilpotent complex structure;
 \item $J$ is a rational complex structure;
 \item $\mathfrak{g}$ admits a torus-bundle series compatible with $J$ and with the rational structure induced by $\Gamma$.
\end{itemize}
Moreover, the property of $i$ being an isomorphism is open in the space of all $G$-left-invariant complex structures on $N$.
}
\smallskip

\noindent Similar results are obtained for a certain class of solvmanifolds.

Then, we use these tools to explicitly compute the Bott-Chern cohomology for a three-dimensional holomorphically parallelizable nilmanifold, the so called \emph{Iwasawa manifold} (see, e.g., \cite{fernandez-gray}) and for its small deformations. The Iwasawa manifold is one of the simplest example of non-K\"ahler manifold: indeed, being non-formal, see \cite[page 158]{fernandez-gray}, it admits no K\"ahler structure; it has been studied by several authors as a fruitful source of interesting behaviors: see, for example, \cite{fernandez-gray, nakamura, alessandrini-bassanelli, bassanelli, ye, angella-tomassini}.\\
As regards the Dolbeault cohomology, I. Nakamura, in \cite{nakamura}, already computed the Hodge numbers for the Iwasawa manifold and for its small deformations: he used these computations to prove that the Hodge numbers are not invariant under small deformations, \cite[Theorem 2]{nakamura} (compare also \cite[\S4]{ye}), and that small deformations of a holomorphically parallelizable complex structure are not necessarily holomorphically parallelizable, \cite[page 86]{nakamura} (compare also \cite[Theorem 5.1, Corollary 5.2]{rollenske-jems}). The Bott-Chern cohomology and its computation for the Iwasawa manifold appeared in the work by M. Schweitzer, see \cite[\S1.c]{schweitzer}, while the computations of the Bott-Chern cohomology for its small deformations were there announced but never written.\\
In particular, the computations in \S\ref{sec:bott-chern-iwasawa} show that one can use the Bott-Chern cohomology to get a finer classification of small deformations of the Iwasawa manifold than using the Dolbeault cohomology as in \cite[page 96]{nakamura}.

\medskip

\noindent{\itshape Acknowledgments.}
The author would like to thank Adriano Tomassini and Jean-Pierre Demailly for their constant encouragement, their support and for many useful conversations. He would like also to thank Institut Fourier, Université de Grenoble \textsc{i}, for its warm hospitality.
Very interesting conversations with S\"onke Rollenske at \textsc{cirm} in Luminy and with Greg Kuperberg at Institut Fourier in Grenoble gave great motivations for looking at further results on this subject. Many thanks to S\"onke are due also for his comments and remarks which improved the presentation of this paper.
The author is very grateful to the anonymous referee for his/her careful reading and for many suggestions and remarks that highly improved the presentation of the paper.

\section{The Bott-Chern and Aeppli cohomologies of a complex manifold}\label{sec:bott-chern-cohomology}

\subsection{The Bott-Chern cohomology}\label{subsec:hodge-theory-bott-chern}
Let $X$ be a compact complex manifold of complex dimension $n$ and denote its complex structure by $J$. The \emph{Bott-Chern cohomology} groups are defined, for $p,q\in\N$, as
$$ H^{p,q}_{BC}(X) \;:=\; \frac{\ker \left(\de\colon\wedge^{p,q}X\to\wedge^{p+q+1}(X;\C)\right)}{\imm\left(\del\delbar\colon\wedge^{p-1,q-1}X\to\wedge^{p+1,q+1}X\right)} \;.$$
Note that, for every $p,q\in\N$, the conjugation induces an isomorphism
$$ H^{p,q}_{BC}(X)\;\simeq\; H^{q,p}_{BC}(X) \;,$$
unlike in the case of the Dolbeault cohomology groups.\\
For every $k\in\N$ and for every $p,q\in\N$, one has the natural maps
$$ \bigoplus_{r+s=k}H^{r,s}_{BC}(X)\to H^{k}_{dR}(X;\C) \qquad \text{ and } \qquad H^{p,q}_{BC}(X)\to H^{p,q}_{\delbar}(X) \;.$$
In general, these maps are neither injective nor surjective, see, e.g., the examples in \cite[\S1.c]{schweitzer} or \S\ref{sec:bott-chern-iwasawa}; if $X$ satisfies the $\del\delbar$-Lemma (for example, if $X$ admits a K\"ahler structure or if $X$ is in Fujiki class $\mathcal{C}$), then the above maps are isomorphisms, see \cite[Remark 5.16]{deligne-griffiths-morgan-sullivan}.

We collect here some results on Hodge theory for the Bott-Chern cohomology, referring to \cite[\S2]{schweitzer} (see also \cite[\S5]{bigolin}).
Fix $g$ a $J$-Hermitian metric on $X$ and define the $4$-th order self-adjoint elliptic differential operator
$$ \tilde\Delta_{BC} \;:=\; \del\delbar\delbarstar\del^*+\delbarstar\del^*\del\delbar+\delbarstar\del\del^*\delbar+\del^*\delbar\delbarstar\del+\delbarstar\delbar+\del^*\del \;, $$
see \cite[Proposition 5]{kodaira-spencer-3} and also \cite[\S2.b]{schweitzer}, \cite[\S5.1]{bigolin}. Given $u\in\wedge^{p,q}X$, one has that
$$ \tilde\Delta_{BC}\,u\;=\;0 \qquad \Leftrightarrow \qquad
\left\{
\begin{array}{rcl}
 \delbar u &=& 0 \\[5pt]
 \del u &=& 0 \\[5pt]
 \delbarstar\del^* u &=& 0
\end{array}
\right. \;;
$$
moreover, being $\tilde\Delta_{BC}$ a self-adjoint elliptic differential operator, the following result holds, see, e.g., \cite[page 450]{kodaira}.
\begin{thm}[{\cite[Th\'eor\`eme 2.2]{schweitzer}, \cite[Corollaire 2.3]{schweitzer}}]
 Let $X$ be a compact complex manifold and denote its complex structure by $J$; fix $g$ a $J$-Hermitian metric on $X$. Then there are an orthogonal decomposition
$$ \wedge^{\bullet,\bullet}X \;=\; \ker\tilde\Delta_{BC} \,\oplus\, \imm\del\delbar \,\oplus\, \left(\imm\del^*\,+\,\imm\delbarstar\right) $$
and an isomorphism
$$ H^{\bullet,\bullet}_{BC}(X) \;\simeq\; \ker\tilde\Delta_{BC} \;.$$
In particular, its Bott-Chern cohomology groups are finite-dimensional $\C$-vector spaces.
\end{thm}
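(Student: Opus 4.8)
The plan is to run the standard Hodge-theoretic machinery attached to the self-adjoint elliptic operator $\tilde\Delta_{BC}$, exactly as one does for the Hodge Laplacian in the de Rham or Dolbeault settings. Since $\tilde\Delta_{BC}$ is self-adjoint and elliptic on the compact manifold $X$ and preserves the bidegree, the general theory of elliptic operators (the fundamental existence theorem together with elliptic regularity, cf.\ \cite[page 450]{kodaira}) yields, in each bidegree $(p,q)$, that $\ker\tilde\Delta_{BC}$ is a finite-dimensional $\C$-vector space and that there is an $L^2$-orthogonal decomposition $\wedge^{p,q}X = \ker\tilde\Delta_{BC}\oplus\imm\tilde\Delta_{BC}$, with the property that any smooth form orthogonal to $\ker\tilde\Delta_{BC}$ lies in $\imm\tilde\Delta_{BC}$. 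This already gives the finite-dimensionality asserted at the end of the statement; the remaining work is to refine the summand $\imm\tilde\Delta_{BC}$ and to identify $\ker\tilde\Delta_{BC}$ with the Bott-Chern cohomology.

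First I would refine the image. Reading off the six terms in the definition of $\tilde\Delta_{BC}$, each of them factors on the left through $\del\delbar$, through $\del^*$, or through $\delbarstar$, so $\imm\tilde\Delta_{BC}\subseteq\imm\del\delbar+\imm\del^*+\imm\delbarstar$. For the reverse inclusion I would check that $\imm\del\delbar+\imm\del^*+\imm\delbarstar$ is orthogonal to $\ker\tilde\Delta_{BC}$: using the characterization $\tilde\Delta_{BC}u=0\Leftrightarrow\del u=\delbar u=\delbarstar\del^*u=0$ stated just above the theorem, for $h\in\ker\tilde\Delta_{BC}$ one has $\langle h,\del^*\gamma\rangle=\langle\del h,\gamma\rangle=0$, $\langle h,\delbarstar\delta\rangle=\langle\delbar h,\delta\rangle=0$, and $\langle h,\del\delbar\beta\rangle=\langle\delbarstar\del^*h,\beta\rangle=0$. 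Since any smooth form orthogonal to $\ker\tilde\Delta_{BC}$ lies in $\imm\tilde\Delta_{BC}$, the reverse inclusion follows, giving $\imm\tilde\Delta_{BC}=\imm\del\delbar+\left(\imm\del^*+\imm\delbarstar\right)$. A short computation using $\del^2=\delbar^2=0$ and $\del\delbar=-\delbar\del$ (so that $\langle\del\delbar\beta,\del^*\gamma\rangle=\langle\del\del\delbar\beta,\gamma\rangle=0$, and likewise $\langle\del\delbar\beta,\delbarstar\delta\rangle=\langle\delbar\del\delbar\beta,\delta\rangle=0$) shows that $\imm\del\delbar$ is orthogonal to $\imm\del^*+\imm\delbarstar$, so the three pieces assemble into the claimed orthogonal decomposition, the two innermost images being grouped together because they need not be mutually orthogonal.

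For the isomorphism I would show $\ker\del\cap\ker\delbar=\ker\tilde\Delta_{BC}\oplus\imm\del\delbar$ in bidegree $(p,q)$. The inclusion $\supseteq$ is clear, since both summands consist of $\del$- and $\delbar$-closed forms (for $\del\delbar\beta$ one uses $\delbar\del\delbar\beta=-\del\delbar^2\beta=0$). For $\subseteq$, take $u\in\ker\del\cap\ker\delbar$ and decompose $u=h+\del\delbar\beta+w$ according to the previous paragraph, with $h\in\ker\tilde\Delta_{BC}$ and $w=\del^*\gamma+\delbarstar\delta$ in the third summand. Then $w=u-h-\del\delbar\beta$ is again $\del$- and $\delbar$-closed, whence $\|w\|^2=\langle w,\del^*\gamma\rangle+\langle w,\delbarstar\delta\rangle=\langle\del w,\gamma\rangle+\langle\delbar w,\delta\rangle=0$, forcing $w=0$. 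Passing to the quotient by $\imm\del\delbar$ then gives the natural isomorphism $H^{p,q}_{BC}(X)\simeq\ker\tilde\Delta_{BC}$.

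I expect the only genuine obstacle to be the input that $\tilde\Delta_{BC}$ is elliptic and self-adjoint with the stated kernel characterization; granting this (it is the content of \cite[Proposition 5]{kodaira-spencer-3} and \cite[\S2.b]{schweitzer}), everything else is the formal linear algebra of orthogonal complements combined with the standard elliptic package. The one point demanding care is that $\imm\del^*$ and $\imm\delbarstar$ are not separately orthogonal, so one must keep them grouped as a single summand $\imm\del^*+\imm\delbarstar$ and exploit only their sum, both when verifying orthogonality to $\imm\del\delbar$ and when proving $w=0$ in the last step.
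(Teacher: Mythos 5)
Your argument is correct, and the paper itself gives no proof of this statement: it is quoted from Schweitzer (Th\'eor\`eme 2.2 and Corollaire 2.3) and from the standard elliptic package in Kodaira--Spencer, which is exactly the machinery you run -- the elliptic decomposition $\wedge^{p,q}X=\ker\tilde\Delta_{BC}\oplus\imm\tilde\Delta_{BC}$, the identification $\imm\tilde\Delta_{BC}=\imm\del\delbar\oplus\left(\imm\del^*+\imm\delbarstar\right)$ via the kernel characterization, and the consequent equality $\ker\del\cap\ker\delbar=\ker\tilde\Delta_{BC}\oplus\imm\del\delbar$. Your care in keeping $\imm\del^*+\imm\delbarstar$ as a single summand is exactly the right point to flag, and nothing in your write-up deviates from the cited proof in any essential way.
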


\subsection{The Aeppli cohomology}\label{subsec:hodge-theory-aeppli}
Let $X$ be a compact complex manifold of complex dimension $n$ and denote its complex structure by $J$.
For $p,q\in\N$, one defines the \emph{Aeppli cohomology} group $H^{p,q}_{A}(X)$ as
$$ H^{p,q}_{A}(X) \;:=\; \frac{\ker \left(\del\delbar\colon\wedge^{p,q}X\to\wedge^{p+1,q+1}X\right)}{\left(\imm\left(\del\colon\wedge^{p-1,q}X\to\wedge^{p,q}X\right)\right)\,+\,\left(\imm\left(\delbar\colon\wedge^{p,q-1}X\to\wedge^{p,q}X\right)\right)} \;.$$
As for the Bott-Chern cohomology, the conjugation induces an isomorphism
$$ H^{p,q}_{A}(X)\;\simeq\; H^{q,p}_{A}(X) $$
for every $p,q\in\N$.\\
Furthermore, for every $k\in\N$ and for every $p,q\in\N$, one has the natural maps
$$ H^{k}_{dR}(X;\C)\to \bigoplus_{r+s=k}H^{r,s}_{A}(X) \qquad \text{ and } \qquad  H^{p,q}_{\delbar}(X)\to H^{p,q}_{A}(X) \;,$$
which are, in general, neither injective nor surjective; once again, the maps above are isomorphisms if $X$ satisfies the $\del\delbar$-Lemma, \cite[Remark 5.16]{deligne-griffiths-morgan-sullivan}, and hence, in particular, if $X$ admits a K\"ahler structure or if $X$ is in Fujiki class $\mathcal{C}$.

\begin{rem}
 On a K\"ahler manifold $X$, the fundamental $2$-form $\omega$ associated to the metric defines a non-zero class in $H^2_{dR}(X;\R)$. For general Hermitian manifolds, special classes of metrics are often defined in terms of closedness of powers of $\omega$ (e.g., a Hermitian metric on a complex manifold of complex dimension $n$ is said \emph{balanced} if $\de\omega^{n-1}=0$, \emph{pluriclosed} if $\del\delbar\omega=0$, \emph{astheno-K\"ahler} if $\del\delbar\omega^{n-2}=0$, \emph{Gauduchon} if $\del\delbar\omega^{n-1}=0$), so they define classes in the Bott-Chern or Aeppli cohomology groups. It would be interesting to see if these classes can play a role similar to the one played by the K\"ahler class in some contexts.
\end{rem}

We refer to \cite[\S2.c]{schweitzer} for the results that follows (for a hypercohomology interpretation of the Bott-Chern and Aeppli cohomologies and its applications, see \cite[\S VI.12.1]{demailly-agbook}, \cite[\S4]{schweitzer}, \cite[\S3.2, \S3.5]{kooistra}).\\
Fixed a $J$-Hermitian metric $g$ on $X$ and defined the $4$-th order self-adjoint elliptic differential operator
$$ \tilde\Delta_{A} \;:=\; \del\del^*+\delbar\delbarstar+\delbarstar\del^*\del\delbar+\del\delbar\delbarstar\del^*+\del\delbarstar\delbar\del^*+\delbar\del^*\del\delbarstar \;, $$
one has an orthogonal decomposition
$$ \wedge^{\bullet,\bullet}X \;=\; \ker\tilde\Delta_{A} \,\oplus\, \left(\imm\del \,+\, \imm\delbar\right)
\,\oplus\, \imm\left(\del\delbar\right)^* $$
from which one gets an isomorphism
$$ H^{\bullet,\bullet}_{A}(X) \;\simeq\; \ker\tilde\Delta_{A} \;;$$
this proves that the Aeppli cohomology groups of a compact complex manifold are finite-dimensional $\C$-vector spaces.

In fact, for any $p,q\in\N$, one has that the Hodge-$*$-operator associated to a $J$-Hermitian metric induces an isomorphism
$$ H^{p,q}_{BC}(X) \simeq H^{n-q,n-p}_{A}(X) $$
between the Bott-Chern and the Aeppli cohomologies.

\section{Some results on cohomology computation}\label{sec:cohomology-computation}
In this section, we collect some results about cohomology computation for nilmanifolds and solvmanifolds. Using these tools, one recovers the de Rham, Dolbeault, Bott-Chern and Aeppli cohomologies for the Iwasawa manifold and for its small deformations, see \S\ref{sec:derham-iwasawa}, \S\ref{sec:dolbeault-iwasawa} and \S\ref{sec:bott-chern-iwasawa}.

\smallskip

Let $X=\left. \Gamma \right\backslash G$ be a solvmanifold, that is, a compact quotient of the connected simply-connected solvable Lie group $G$ by a discrete and co-compact subgroup $\Gamma$; the Lie algebra naturally associated to $G$ will be denoted by $\mathfrak{g}$ and its complexification by $\mathfrak{g}_\C:=\mathfrak{g}\otimes_\R\C$. Dealing with $G$-left-invariant objects on $X$, we mean objects on $X$ obtained by objects on $G$ that are invariant under the action of $G$ on itself given by left-translation.

A $G$-left-invariant complex structure $J$ on $X$ is uniquely determined by a linear complex structure $J$ on $\mathfrak{g}$ satisfying the integrability condition
$$ \forall x,y\in\mathfrak{g}\;,\qquad \Nij_J(x,y)\;:=\;\left[x,\,y\right]+J\left[Jx,\,y\right]+J\left[x,\,Jy\right]-\left[Jx,\,Jy\right]\;=\;0 \;,$$
see \cite[Theorem 1.1]{newlander-nirenberg}. Therefore, the set of $G$-left-invariant complex structures on $X$ is given by
$$ \mathcal{C}\left(\mathfrak{g}\right) := \left\{ J\in\End\left(\mathfrak{g}\right) \st J^2=-\id_{\mathfrak{g}} \;\text{ and }\;\Nij_J=0 \right\} \;.$$

Recall that the exterior differential $\de$ on $X$ can be written using only the action of $\Gamma(X;\,TX)$ on $\mathcal{C}^\infty(X)$ and the Lie bracket on the Lie algebra of vector fields on $X$. One has that the complex $\left(\wedge^\bullet\mathfrak{g}^*,\,\de\right)$ is isomorphic, as a differential complex, to the differential subcomplex $\left(\wedge^\bullet_{\text{inv}}X,\,\de\lfloor_{\wedge^\bullet_{\text{inv}}X}\right)$ of $\left(\wedge^\bullet X,\,\de\right)$ given by the $G$-left-invariant forms on $X$.\\
If a $G$-left-invariant complex structure on $X$ is given, then one has also the double complex $\left(\wedge^{\bullet,\bullet}\mathfrak{g}_{\C}^*,\,\del,\,\delbar\right)$, which is isomorphic, as a double complex, to the double subcomplex $\left(\wedge^{\bullet,\bullet}_{\text{inv}}X,\,\del\lfloor_{\wedge^{\bullet,\bullet}_{\text{inv}}X},\,\delbar\lfloor_{\wedge^{\bullet,\bullet}_{\text{inv}}X}\right)$ of $\left(\wedge^{\bullet,\bullet}X,\,\del,\,\delbar\right)$ given by the $G$-left-invariant forms on $X$.\\
Lastly, given a $G$-left-invariant complex structure on $G$ and fixed $p,q\in\N$, one has also the following complexes and the following maps of complexes:
\begin{equation}\label{eq:bc-complessi}
\begin{gathered}
\xymatrix{
\wedge^{p-1,q-1}\mathfrak{g}_\C^* \ar[r]^{\del\delbar} \ar[d]^{\simeq} & \wedge^{p,q}\mathfrak{g}_\C^* \ar[r]^{\de} \ar[d]^{\simeq} & \wedge^{p+q+1}\mathfrak{g}_\C^* \ar[d]^{\simeq} \\
\wedge^{p-1,q-1}_{\text{inv}}X \ar[r]^{\del\delbar} \ar@{^{(}->}[d]^{i} & \wedge^{p,q}_{\text{inv}}X \ar[r]^{\de} \ar@{^{(}->}[d]^{i} & \wedge^{p+q+1}_{\text{inv}}(X;\C) \ar@{^{(}->}[d]^{i} \\
\wedge^{p-1,q-1}X \ar[r]^{\del\delbar} & \wedge^{p,q}X \ar[r]^{\de} & \wedge^{p+q+1}(X;\C)
}
\end{gathered}
\;;
\end{equation}
the same can be repeated for the complex used to define the $(p,q)$-th Aeppli cohomology group.

For $\star\in\left\{\delbar,\,\del,\,BC,\,A\right\}$ and $\mathbb{K}\in\left\{\R,\C\right\}$, we will write $H^\bullet_{dR}\left(\mathfrak{g};\mathbb{K}\right)$ and $H^{\bullet,\bullet}_{\star}\left(\mathfrak{g}_\C\right)$ to denote the cohomology groups of the corresponding complexes of forms on $\mathfrak{g}$, equivalently, of $G$-left-invariant forms on $X$. The rest of this section is devoted to the problem whether these cohomologies are isomorphic to the corresponding cohomologies on $X$.

\subsection{Classical results on computation for the de Rham and Dolbeault cohomologies}\label{subsec:cohomology-computation-derham-dolbeault}

One has the following theorem by K. Nomizu, saying that the de Rham cohomology of a nilmanifold can be computed as the cohomology of the subcomplex of left-invariant forms.

\begin{thm}[{\cite[Theorem 1]{nomizu}}]\label{thm:nomizu}
 Let $N=\left. \Gamma \right\backslash G$ be a nilmanifold and denote the Lie algebra naturally associated to $G$ by $\mathfrak{g}$. The complex $\left(\wedge^{\bullet}\mathfrak{g}^*,\de\right)$ is a minimal model for $N$. In particular, the map of complexes $\left(\wedge^\bullet\mathfrak{g}^*,\,\de\right)\to\left(\wedge^\bullet N,\,\de\right)$ is a quasi-isomorphism, that is, it induces an isomorphism in cohomology:
$$ i\colon H^{\bullet}_{dR}(\mathfrak{g};\R) \stackrel{\simeq}{\longrightarrow} H^\bullet_{dR}(N;\R) \;.$$
\end{thm}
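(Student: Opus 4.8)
The plan is to prove the cohomology isomorphism by induction on the dimension of $G$, exploiting the fact that a nilmanifold is an iterated principal torus bundle, and to compare the Serre spectral sequence of this fibration with the Hochschild--Serre spectral sequence of the corresponding central extension of Lie algebras. The minimal-model assertion I would treat separately, since it is essentially a restatement of nilpotency.

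First I would set up the inductive step. Let $G=G^{(1)}\supseteq G^{(2)}\supseteq\cdots\supseteq G^{(k)}\supseteq G^{(k+1)}=\{e\}$ be the lower central series, so that the last nontrivial term $G^{(k)}$ is central in $G$ and isomorphic to some $\R^m$. By Malcev's rationality theory, $\Gamma\cap G^{(k)}$ is a lattice in $G^{(k)}$, and $\bar\Gamma:=\Gamma/(\Gamma\cap G^{(k)})$ is a lattice in $\bar G:=G/G^{(k)}$, which is again simply-connected nilpotent but of strictly smaller dimension. Hence $N=\Gamma\backslash G$ is the total space of a principal torus bundle
\[
T^m \;\longrightarrow\; N \;\longrightarrow\; \bar N:=\bar\Gamma\backslash\bar G\,,
\]
and on the Lie-algebra side one has the central extension $0\to\mathfrak{z}\to\mathfrak{g}\to\bar{\mathfrak{g}}\to 0$ with $\mathfrak{z}=\operatorname{Lie}(G^{(k)})$ and $\bar{\mathfrak{g}}=\mathfrak{g}/\mathfrak{z}$.

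Next I would compare two spectral sequences. The fibration above yields the Serre spectral sequence with $E_2^{p,q}=H^p_{dR}(\bar N;\R)\otimes H^q_{dR}(T^m;\R)$, the local system being trivial because the fibre is a \emph{central} torus, so the monodromy acts trivially on $H^\bullet(T^m)$. Dually, the central extension of $\mathfrak{g}$ gives a filtration of the Chevalley--Eilenberg complex $\wedge^\bullet\mathfrak{g}^*$ whose associated Hochschild--Serre spectral sequence has $E_2^{p,q}=H^p_{dR}(\bar{\mathfrak{g}};\R)\otimes\wedge^q\mathfrak{z}^*$, again with trivial coefficients since $\mathfrak{z}$ is central. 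The inclusion of left-invariant forms $\wedge^\bullet\mathfrak{g}^*\hookrightarrow\wedge^\bullet N$ is filtered and therefore induces a morphism of the two spectral sequences. On the $E_2$-page this morphism is, in the base direction, the map $H^p_{dR}(\bar{\mathfrak{g}};\R)\to H^p_{dR}(\bar N;\R)$, which is an isomorphism by the induction hypothesis applied to the lower-dimensional nilmanifold $\bar N$, and in the fibre direction the identification $\wedge^q\mathfrak{z}^*\cong H^q_{dR}(T^m;\R)$, which is the standard computation of the cohomology of a torus by invariant forms. Thus the morphism is an isomorphism on $E_2$, and by the comparison theorem for (first-quadrant) spectral sequences it is an isomorphism on every later page and on the abutments, giving $i\colon H^\bullet_{dR}(\mathfrak{g};\R)\stackrel{\simeq}{\longrightarrow}H^\bullet_{dR}(N;\R)$. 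The base case is the abelian one, $G=\R^n$ and $N=T^n$, where $\wedge^\bullet\mathfrak{g}^*$ carries the zero differential and the statement reduces to the fact that $H^\bullet_{dR}(T^n;\R)$ is generated by the translation-invariant forms $\de x_1,\dots,\de x_n$. For the minimal-model claim, I would observe that nilpotency of $\mathfrak{g}$ means precisely that $\mathfrak{g}^*$ admits a basis $\xi_1,\dots,\xi_n$, adapted to the lower central series, in which $\de\xi_j$ is a linear combination of the products $\xi_\ell\wedge\xi_i$ with $\ell,i<j$; hence $\de$ carries each generator into the decomposable elements, which is exactly the minimality condition for the free graded-commutative algebra $\wedge^\bullet\mathfrak{g}^*$. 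Together with the quasi-isomorphism just established, this identifies $(\wedge^\bullet\mathfrak{g}^*,\de)$ as a minimal model for $N$.

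The main obstacle I expect is the careful justification of the fibre-direction input to the comparison: one must ensure that $\Gamma\cap G^{(k)}$ really is a lattice in the central subgroup $G^{(k)}$ (so that the fibre is a compact torus rather than a non-compact quotient), that the resulting local system on $\bar N$ is genuinely trivial, and that the inclusion of invariant forms respects both filtrations compatibly, so as to induce an honest morphism of spectral sequences. All of these rest on Malcev's structure theory for lattices in nilpotent Lie groups, which supplies the rationality needed to propagate the torus-bundle structure through the induction.
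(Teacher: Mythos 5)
Your proposal is correct and follows exactly the route the paper indicates for this (cited) theorem of Nomizu: an induction on dimension using the presentation of a nilmanifold as a principal torus bundle over a lower-dimensional nilmanifold, with the inductive step carried out by comparing the Serre spectral sequence of the bundle with the Hochschild--Serre spectral sequence of the central extension $0\to\mathfrak{z}\to\mathfrak{g}\to\bar{\mathfrak{g}}\to 0$. The points you flag as needing care (rationality of $G^{(k)}\cap\Gamma$ via Malcev theory, triviality of the local system, compatibility of filtrations) are indeed the standard technical inputs and are handled correctly.
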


\noindent The proof rests on an inductive argument, which can be performed since every nilmanifold can be seen as a principal torus-bundle over a lower dimensional nilmanifold.\\
A similar result holds also in the case of completely-solvable solvmanifolds and has been proved by A. Hattori in \cite[Corollary 4.2]{hattori}, as a consequence of the Mostow Structure Theorem (see also \cite[Chapter 3]{tralle-oprea}). We recall that a solvmanifold $X=\left.\Gamma\right\backslash G$ is said \emph{completely-solvable} if, for any $g\in G$, all the eigenvalues of $\Ad g$ are real, equivalently, if, for any $X\in\mathfrak{g}$, all the eigenvalues of $\ad X$ are real.

\begin{thm}[{\cite[Corollary 4.2]{hattori}}]
 Let $X=\left. \Gamma \right\backslash G$ be a completely-solvable solvmanifold and denote the Lie algebra naturally associated to $G$ by $\mathfrak{g}$. Then the map of complexes $\left(\wedge^\bullet\mathfrak{g}^*,\,\de\right)\to\left(\wedge^\bullet X,\,\de\right)$ is a quasi-isomorphism, that is, it induces an isomorphism in cohomology:
$$ i\colon H^{\bullet}_{dR}(\mathfrak{g};\R) \stackrel{\simeq}{\longrightarrow} H^\bullet_{dR}(X;\R) \;.$$
\end{thm}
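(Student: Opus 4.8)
The plan is to reduce the completely-solvable case to the nilpotent case already settled by Nomizu's Theorem \ref{thm:nomizu}, using the fibered structure supplied by the Mostow Structure Theorem together with a weight-space argument that exploits the reality of the eigenvalues of $\ad$. First I would invoke the Mostow Structure Theorem to realize $X$ as the total space of a fiber bundle: writing $\mathfrak{n}$ for the nilradical of $\mathfrak{g}$ and $N=\exp\mathfrak{n}$, the subgroup $\Gamma N$ is closed in $G$ and, since $G/N$ is abelian for solvable $G$, one obtains a fibration
$$ F \;:=\; \left.(\Gamma\cap N)\right\backslash N \;\hookrightarrow\; X \;\longrightarrow\; T \;:=\; \left.\Lambda\right\backslash (G/N) \;,$$
whose fiber $F$ is a nilmanifold and whose base $T$ is a torus, $\Lambda$ denoting the image of $\Gamma$ in $G/N$. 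The inclusion $\iota\colon(\wedge^\bullet\mathfrak{g}^*,\de)\to(\wedge^\bullet X,\de)$ is then a morphism of filtered complexes, the filtrations being induced by the ideal $\mathfrak{n}\triangleleft\mathfrak{g}$ on the algebra side and by the fibration on the manifold side; hence $\iota$ induces a morphism from the Hochschild--Serre spectral sequence of the pair $(\mathfrak{g},\mathfrak{n})$ to the Leray--Serre spectral sequence of the bundle $F\hookrightarrow X\to T$.

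The next step is to identify the second pages. On the fiber, Nomizu's Theorem \ref{thm:nomizu} applied to the nilmanifold $F$ gives $H^q_{dR}(\mathfrak{n};\R)\stackrel{\simeq}{\longrightarrow}H^q_{dR}(F;\R)$, and this isomorphism intertwines the $\mathfrak{a}:=\mathfrak{g}/\mathfrak{n}$-module structure with the monodromy action of $\pi_1(T)\cong\Lambda$. It therefore suffices to prove, for every $q$, that the comparison map
$$ H^p_{dR}\!\left(\mathfrak{a};\,H^q_{dR}(\mathfrak{n};\R)\right)\;\longrightarrow\;H^p\!\left(T;\,\mathcal{H}^q(F)\right) $$
between the Chevalley--Eilenberg cohomology of the abelian algebra $\mathfrak{a}$ and the cohomology of the corresponding flat bundle on the torus is an isomorphism.

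The heart of the matter — and the step I expect to be the main obstacle — is precisely this last comparison, where complete solvability becomes indispensable. I would decompose the $\mathfrak{a}$-module $V:=H^q_{dR}(\mathfrak{n};\R)\otimes_\R\C$ into generalized weight spaces $V=\bigoplus_\lambda V_\lambda$, with $\lambda\in\mathfrak{a}_\C^*$. On the algebraic side, choosing for $\lambda\neq0$ an element of $\mathfrak{a}$ on which the action is invertible yields a contracting homotopy, so $H^p_{dR}(\mathfrak{a};V_\lambda)=0$ for $\lambda\neq0$, while $H^p_{dR}(\mathfrak{a};V_0)\cong\wedge^p\mathfrak{a}_\C^*\otimes V_0$; on the geometric side the twisted cohomology of the flat bundle with monodromy eigenvalue $\mu=\exp\lambda$ vanishes unless $\mu=1$. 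The bridge is that, $G$ being completely-solvable, every weight $\lambda$ is \emph{real}, so $\exp\lambda=1$ forces $\lambda=0$: the two vanishing patterns match exactly, and on the surviving zero-weight piece both cohomologies reduce to the untwisted computation on the torus, giving the desired isomorphism. It is here that the hypothesis cannot be dropped — complex weights could produce $\exp\lambda=1$ with $\lambda\neq0$, breaking the correspondence.

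Finally, a standard comparison argument upgrades this isomorphism of $E_2$-pages to the conclusion: a morphism of convergent spectral sequences that is an isomorphism on $E_2$ is an isomorphism on the abutments, whence $i\colon H^\bullet_{dR}(\mathfrak{g};\R)\stackrel{\simeq}{\longrightarrow}H^\bullet_{dR}(X;\R)$, as claimed. The main care points in carrying this out are verifying that the Nomizu isomorphism on the fiber is genuinely equivariant for the two actions, and treating the non-semisimple (generalized, rather than honest) weight spaces so that the vanishing statements hold in full.
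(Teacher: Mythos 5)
The paper gives no proof of this theorem --- it is quoted from Hattori's \cite[Corollary 4.2]{hattori} with only the remark that it follows from the Mostow Structure Theorem --- and your sketch (Mostow fibration over a torus, comparison of the Hochschild--Serre and Leray--Serre spectral sequences, and the real-weight argument that pinpoints where complete solvability enters) is precisely that standard argument, so it matches the approach the paper indicates. The one inaccuracy is the formula $H^p_{dR}(\mathfrak{a};V_0)\cong\wedge^p\mathfrak{a}_\C^*\otimes V_0$, which fails when the nilpotent action of $\mathfrak{a}$ on the generalized weight-$0$ space is non-trivial; but all you need is that the comparison map is an isomorphism on that piece, which follows by induction along a filtration of $V_0$ with trivial subquotients.
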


\noindent In general, for non-completely-solvable solvmanifolds the map of complexes $\left(\wedge^\bullet\mathfrak{g}^*,\,\de\right)\to\left(\wedge^\bullet X,\,\de\right)$ is not necessarily a quasi-isomorphism, as the example in \cite[Corollary 4.2, Remark 4.3]{debartolomeis-tomassini} shows (for some results about the de Rham cohomology of solvmanifolds, see \cite{console-fino-2}).

\medskip

Considering nilmanifolds endowed with certain $G$-left-invariant complex structures, there are similar results also for the Dolbeault cohomology, see, e.g.,  \cite{console} and \cite{rollenske-survey} for surveys on the known results. (Some results about the Dolbeault cohomology of solvmanifolds have been recently proved by H. Kasuya, see \cite{kasuya}.)

\begin{thm}[{\cite[Theorem 1]{sakane}, \cite[Main Theorem]{cordero-fernandez-gray-ugarte-2000}, \cite[Theorem 2, Remark 4]{console-fino}, \cite[Theorem 1.10]{rollenske}}]\label{thm:dolbeault-invariant}
 Let $N=\left. \Gamma \right\backslash G$ be a nilmanifold and denote the Lie algebra naturally associated to $G$ by $\mathfrak{g}$. Let $J$ be a $G$-left-invariant complex structure on $N$.
Then, for every $p\in\N$, the map of complexes
\begin{equation}\label{eq:dolb-complessi}
\left(\wedge^{p,\bullet}\mathfrak{g}_{\C}^*,\,\delbar\right) 
\hookrightarrow\left(\wedge^{p,\bullet}N,\,\delbar\right)
\end{equation}
is a quasi-isomorphism, hence
$$
i \colon H^{\bullet,\bullet}_{\delbar}\left(\mathfrak{g}_\C\right) \stackrel{\simeq}{\longrightarrow} H^{\bullet,\bullet}_{\delbar}(N) \;,$$
provided one of the following conditions holds:
\begin{itemize}
 \item $N$ is holomorphically parallelizable, see \cite[Theorem 1]{sakane};
 \item $J$ is an \emph{Abelian} complex structure (i.e., $\left[Jx,\,Jy\right]=\left[x,\,y\right]$ for any $x,y\in\mathfrak{g}$), see \cite[Remark 4]{console-fino};
 \item $J$ is a \emph{nilpotent} complex structure (i.e., there is a $G$-left-invariant co-frame $\left\{\omega^1,\ldots,\omega^n\right\}$ for $\left(T^{1,0}N\right)^*$ with respect to which the structure equations of $N$ are of the form
$$ \de\omega^j\;=\; \sum_{h<k<j}A_{hk}^j\,\omega^h\wedge\omega^k+\sum_{h,k<j}B_{h k}^j\,\omega^h\wedge\bar\omega^k $$
with $\left\{A_{hk}^j,\,B_{h k}^j\right\}_{j,h,k}\subset\C$), see \cite[Main Theorem]{cordero-fernandez-gray-ugarte-2000};
 \item $J$ is a \emph{rational} complex structure (i.e., $J\left(\mathfrak{g}_\Q\right)\subseteq \mathfrak{g}_\Q$ where $\mathfrak{g}_\Q$ is the rational structure for $\mathfrak{g}$ ---that is, a $\Q$-vector space such that $\mathfrak{g}=\mathfrak{g}_\Q\otimes_\Q\R$--- induced by $\Gamma$), see \cite[Theorem 2]{console-fino};
 \item $\mathfrak{g}$ admits a torus-bundle series compatible with $J$ and with the rational structure induced by $\Gamma$, see \cite[Theorem 1.10]{rollenske}.
\end{itemize}
\end{thm}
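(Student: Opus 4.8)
The plan is to prove the two inequalities $\dim_\C H^{p,q}_\delbar(\mathfrak{g}_\C)\le\dim_\C H^{p,q}_\delbar(N)$ and its reverse; the first holds in complete generality, while the second will exploit each of the listed hypotheses. First I would dispose of the easy inequality, which simultaneously yields the injectivity of $i$. Choosing a $J$-Hermitian inner product on $\mathfrak{g}$ and extending it by left-translation to a Hermitian metric on $N$, the $G$-action is by biholomorphic isometries; hence integrating the pull-backs $L_g^*\alpha$ against the normalized bi-invariant volume of $N$ defines a \emph{symmetrization} operator $\mu\colon\wedge^{p,q}N\to\wedge^{p,q}\mathfrak{g}_\C^*$ which commutes with $\del$ and $\delbar$ and restricts to the identity on invariant forms. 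Thus $\mu\circ i=\id$ on the complex $(\wedge^{p,\bullet}\mathfrak{g}_\C^*,\delbar)$, so $i$ is injective and $\mu$ surjective in cohomology; in particular $i$ is an isomorphism as soon as the dimensions of the two Dolbeault cohomologies agree.

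It then remains to prove $\dim_\C H^{p,q}_\delbar(N)\le\dim_\C H^{p,q}_\delbar(\mathfrak{g}_\C)$, and here the common mechanism is that each hypothesis realizes $N$ as an iterated \emph{principal holomorphic torus bundle}. Concretely, one wants a filtration of $\mathfrak{g}$ by $J$-invariant, $\de$-closed, \emph{rational} ideals whose successive quotients are complex tori; this is literally the torus-bundle series of the last item, and for the others it is produced from the extra structure. Given such a tower $N=N_s\to N_{s-1}\to\cdots\to N_0=\{\mathrm{pt}\}$, each step is a holomorphic fibre bundle whose fibre $T_k$ is a complex torus, and rationality guarantees that the fibres are \emph{compact}. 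I would then run the Borel spectral sequence of each such fibration, whose $E_2$-term is $H^{\bullet,\bullet}_\delbar(N_{k-1})\otimes H^{\bullet,\bullet}_\delbar(T_k)$ (the fibre being Kähler), in parallel with the analogous spectral sequence built from the invariant complexes $\wedge^{\bullet,\bullet}\mathfrak{g}_\C^*$, with $i$ a morphism between the two.

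The induction is on the length $s$ of the tower. The base case is a complex torus, where invariant forms already exhaust the harmonic forms and $i$ is trivially an isomorphism. For the inductive step, $i$ is an isomorphism on the base $N_{k-1}$ by the inductive hypothesis and on the fibre $T_k$ by the base case; since the two spectral sequences share the same $E_2$-page and $i$ is an isomorphism there, it is an isomorphism on the abutment $H^{\bullet,\bullet}_\delbar(N_k)\cong H^{\bullet,\bullet}_\delbar(\mathfrak{g}_\C)$. Feeding the five hypotheses into this scheme: if $N$ is holomorphically parallelizable then $G$ is a complex nilpotent Lie group and its lower central series gives the rational tower (Sakane's setting); if $J$ is nilpotent, the triangular structure equations (no $\bar\omega\wedge\bar\omega$ term, strictly decreasing index dependence) produce exactly an ascending tower of holomorphic principal torus bundles; if $J$ is Abelian then $[Jx,Jy]=[x,y]$ forces the same triangular shape, so it is a special nilpotent case; and if $J$ is rational, or a torus-bundle series is assumed, the rational $J$-invariant ideals are available directly.

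I expect the inductive step to be the main obstacle, specifically the compatibility of the two Borel spectral sequences: one must verify that $i$ is genuinely a morphism of spectral sequences and that an isomorphism on $E_2$ propagates to $E_\infty$, which rests on applying the Five Lemma degree by degree once the differentials are known to be respected. The decisive role of the rationality hypotheses is precisely to make the subgroups defining the tower \emph{closed}, hence the fibres compact complex tori; without it the fibration has non-compact fibres and the Borel spectral sequence is unavailable. This is exactly the point at which Nomizu's purely real argument, which requires no complex structure, must be refined. The bookkeeping of the triangular structure equations in the nilpotent and Abelian cases, needed to extract the tower and to identify the fibre cohomology, is the most laborious though conceptually routine ingredient.
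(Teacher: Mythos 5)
This theorem is not proved in the paper at all: it is quoted as a package of known results, with each bullet point attributed to its original source (Sakane; Cordero--Fern\'andez--Gray--Ugarte; Console--Fino; Rollenske). So there is no internal argument to compare against; what can be said is whether your sketch is a faithful reconstruction of the cited proofs. In broad strokes it is: injectivity of $i$ plus an induction along a tower of principal holomorphic torus bundles, run through a Borel-type spectral sequence, is exactly the mechanism of the nilpotent, rational and torus-bundle-series cases, and the reduction of the other hypotheses to the existence of such a tower is the right organizing principle. Your symmetrization trick for injectivity is a legitimate alternative to the Hodge-theoretic argument the paper uses in the analogous Lemma~\ref{lemma:inj}, though as written it is not quite well-defined: left translations by $g\in G$ do not descend to $\left.\Gamma\right\backslash G$, so one cannot average $L_g^*\alpha$ over $N$; the correct (Belgun-type) symmetrization evaluates $\alpha$ on left-invariant vector fields and averages the resulting $\Gamma$-invariant functions over $N$, and one must then check it commutes with $\delbar$, which uses the left-invariance of $J$.

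The genuine gaps are concentrated where you yourself locate the difficulty, and they are not merely bookkeeping. First, the identification of the $E_2$-term of the Borel spectral sequence of a holomorphic fibre bundle with $H^{\bullet,\bullet}_{\delbar}(\text{base})\otimes H^{\bullet,\bullet}_{\delbar}(\text{fibre})$ is not automatic even for compact K\"ahler fibre; it requires the bundle to be principal (so the monodromy on fibre cohomology is trivial) and a careful handling of the extra grading, and this is where most of the work in the cited papers actually sits. Second, and more seriously, the rationality of the tower is precisely what must be \emph{proved} in the holomorphically parallelizable, Abelian and nilpotent cases, not assumed: one must show that the relevant $J$-invariant ideals (the descending central series, respectively the ascending series adapted to $J$) are rational with respect to the lattice $\Gamma$, so that the corresponding subgroups meet $\Gamma$ in lattices and the fibres are compact tori. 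Likewise ``Abelian $\Rightarrow$ nilpotent'' is a true but nontrivial lemma. Your sketch names these points as ``conceptually routine,'' but they are the substance of the cited theorems; without them the induction does not start. As a proof proposal it is a correct map of the territory rather than a proof.
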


\medskip

We recall also the following theorem by S. Console and A. Fino.
\begin{thm}[{\cite[Theorem 1]{console-fino}}]\label{thm:dolbeault-invariant-open}
 Let $N=\left.\Gamma\right\backslash G$ be a nilmanifold and denote the Lie algebra naturally associated to $G$ by $\mathfrak{g}$. Given any $G$-left-invariant complex structure $J$ on $N$, the map of complexes \eqref{eq:dolb-complessi}
induces an injective homomorphism $i$ in cohomology, \cite[Lemma 9]{console-fino}):
$$ i\colon H^{\bullet,\bullet}_{\delbar}\left(\mathfrak{g}_\C\right) \hookrightarrow H^{\bullet,\bullet}_{\delbar}(N) \;.$$
 Let $\mathcal{U}\subseteq\mathcal{C}(\mathfrak{g})$ be the subset containing the $G$-left-invariant complex structures $J$ on $N$ such that the inclusion $i$ is an isomorphism:
$$ \mathcal{U} \;:=\; \left\{ J\in\mathcal{C}\left(\mathfrak{g}\right) \st i\colon H^{\bullet,\bullet}_{\delbar}\left(\mathfrak{g}_\C\right)\stackrel{\simeq}{\hookrightarrow}H^{\bullet,\bullet}_{\delbar}(N)\right\} \;\subseteq\; \mathcal{C}\left(\mathfrak{g}\right) \;. $$
Then $\mathcal{U}$ is an open set in $\mathcal{C}\left(\mathfrak{g}\right)$.
\end{thm}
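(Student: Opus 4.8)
The plan is to prove the two assertions of the theorem --- injectivity of $i$ for \emph{every} $J$, and openness of the locus $\mathcal{U}$ on which $i$ is an isomorphism --- by two separate devices: a \emph{symmetrization} operator for injectivity, and an orthogonal splitting of the Dolbeault complex into an invariant and a ``non-invariant'' subcomplex for openness. Throughout I fix a $G$-left-invariant Hermitian metric $g$ on $N$ (obtained by averaging an arbitrary one), chosen to vary continuously with $J$ as $J$ moves in $\mathcal{C}(\mathfrak{g})$.

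For injectivity I would use the normalized bi-invariant Haar measure on the unimodular group $G$ (nilpotent groups are unimodular), which descends to a volume form on $N$. Evaluating a form on left-invariant vector fields and integrating defines a symmetrization $\mu\colon\wedge^{\bullet,\bullet}N\to\wedge^{\bullet,\bullet}_{\text{inv}}N\simeq\wedge^{\bullet,\bullet}\mathfrak{g}_\C^*$, a projection onto the invariant forms. Since $\de$ is expressed through left-invariant fields, their brackets and their action on functions, the left-invariance of the measure makes $\mu$ commute with $\de$; as $J$ is invariant, $\mu$ preserves the bidegree and hence commutes with $\del$ and $\delbar$. Because $\mu\circ i=\id$ on invariant forms, the induced maps satisfy $\mu_*\circ i_*=\id$, so $i_*$ is injective; this is \cite[Lemma 9]{console-fino}.

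For openness the key structural point is that, with respect to the invariant metric $g$, the orthogonal complement of the invariant forms is again a subcomplex. Indeed $\delbar$ and its adjoint $\delbarstar$ both preserve the invariant forms --- $\delbar$ obviously, and $\delbarstar$ because it is built from $\del$ and the Hodge-$*$-operator of $g$, each of which sends invariant forms to invariant forms --- so for $\beta$ orthogonal to the invariant forms and $\alpha$ invariant one has $\langle\delbar\beta,\alpha\rangle=\langle\beta,\delbarstar\alpha\rangle=0$, whence $\delbar\beta$ is again non-invariant. Writing $\wedge^{p,\bullet}_{\text{n.i.}}$ for this complement, one obtains a splitting of $\delbar$-complexes $\wedge^{p,\bullet}N=\wedge^{p,\bullet}_{\text{inv}}\oplus\wedge^{p,\bullet}_{\text{n.i.}}$, hence $H^{p,q}_{\delbar}(N)\simeq H^{p,q}_{\delbar}(\mathfrak{g}_\C)\oplus H^q(\wedge^{p,\bullet}_{\text{n.i.}},\delbar)$ with $i_*$ the inclusion of the first summand. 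Thus $J\in\mathcal{U}$ if and only if the non-invariant Dolbeault cohomology $c^{p,q}(J):=\dim_\C H^q(\wedge^{p,\bullet}_{\text{n.i.}},\delbar)$ vanishes for all $p,q$. To see that this is open I would realize $c^{p,q}(J)$ as a single elliptic kernel dimension: since the invariant forms are a reducing subspace for $\Delta_{\delbar}$, the modified operator $\tilde\Delta_J:=\Delta_{\delbar,J}+P_{\text{inv},J}$, with $P_{\text{inv},J}$ the finite-rank orthogonal projection onto invariant $(p,q)$-forms, has the same principal symbol as $\Delta_{\delbar,J}$, is elliptic and self-adjoint, satisfies $\tilde\Delta_J\geq\id$ on invariant forms and $\tilde\Delta_J=\Delta_{\delbar,J}$ on the complement, so $\dim_\C\ker\tilde\Delta_J=c^{p,q}(J)$. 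The family $\tilde\Delta_J$ varies continuously with $J$, so by the semicontinuity of the kernel dimension of an elliptic operator $c^{p,q}$ is upper-semicontinuous; being a non-negative integer vanishing at $J_0\in\mathcal{U}$, it vanishes near $J_0$, which is the claimed openness.

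The point I would be most careful about is precisely this upper-semicontinuity of $c^{p,q}=\dim_\C H^{p,q}_{\delbar}(N)-\dim_\C H^{p,q}_{\delbar}(\mathfrak{g}_\C)$. A purely formal argument fails: both terms are upper-semicontinuous and both genuinely jump under arbitrarily small deformations (as Nakamura's computations for the Iwasawa manifold show, \cite{nakamura}), so a difference of upper-semicontinuous functions is not \emph{a priori} upper-semicontinuous. What rescues the statement is the splitting into subcomplexes, which exhibits $c^{p,q}$ as an honest elliptic kernel dimension rather than as a difference: the jumps of $\dim_\C H^{p,q}_{\delbar}(N)$ are entirely absorbed by the finite-dimensional invariant block, while the non-invariant block stays spectrally gapped near $J_0$. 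Establishing the splitting rigorously needs only the existence of an invariant Hermitian metric and the invariance of the Hodge-$*$-operator, after which the openness reduces to the standard Kodaira--Spencer-type semicontinuity for the continuously varying elliptic operator $\tilde\Delta_J$.
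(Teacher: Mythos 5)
Your proposal is correct, and its core --- openness via upper-semicontinuity of the dimension of the ``non-invariant'' part of the Dolbeault cohomology, detected through the spectrum of the Laplacian --- is precisely the strategy the paper attributes to Console--Fino and carries out explicitly for the Bott--Chern analogue in Theorem \ref{thm:bc-invariant-open}. The differences are in the packaging. For openness, the paper does not perturb the Laplacian by the finite-rank projection: it uses the same orthogonal splitting you establish (that the orthogonal complement of $\wedge^{\bullet,\bullet}\mathfrak{g}_\C^*$ is preserved by $\delbar$, $\delbarstar$ and hence by the Laplacian), restricts $\Delta_t$ to that complement, and counts eigenvalues below a threshold $c$ chosen outside the closure of the spectrum, invoking Kodaira--Spencer for continuity of the eigenvalues; your operator $\tilde\Delta_J=\Delta_{\delbar,J}+P_{\mathrm{inv},J}$ accomplishes the same in one stroke, at the mild cost of leaving the class of differential operators (harmless, since the perturbation is smoothing and finite-rank, so ellipticity, self-adjointness, discreteness of the spectrum and semicontinuity of the kernel all survive). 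For injectivity, you average against the bi-invariant Haar measure to get a chain-map retraction $\mu$ with $\mu\circ i=\mathrm{id}$, whereas the paper's version of \cite[Lemma 9]{console-fino} (its Lemma \ref{lemma:inj}) argues Hodge-theoretically by pairing against a form orthogonal to the invariant ones; both are standard and valid, and in fact your $\mu$ is exactly the $L^2$-orthogonal projection onto invariant forms, so your splitting of the $\delbar$-complex already yields injectivity for free and the averaging argument is strictly speaking redundant in your setup. Your closing observation --- that one cannot deduce semicontinuity of $c^{p,q}$ as a difference of two semicontinuous functions, and that the orthogonal splitting is what rescues the statement --- is exactly the right point to isolate.
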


\noindent The strategy of the proof consists in proving that the dimension of the orthogonal of $H^{\bullet,\bullet}_{\delbar}\left(\mathfrak{g}_\C\right)$ in $H^{\bullet,\bullet}_{\delbar}(N)$ with respect to a given $J$-Hermitian $G$-left-invariant metric on $N$ is an upper-semi-continuous function in $J\in\mathcal{C}\left(\mathfrak{g}\right)$ and thus if it is zero for a given $J\in\mathcal{C}\left(\mathfrak{g}\right)$, then it remains equal to zero in an open neighborhood of $J$ in $\mathcal{C}\left(\mathfrak{g}\right)$.\\
We will use the same argument in proving Theorem \ref{thm:bc-invariant-open}, which is a slight modification of the previous result in the case of the Bott-Chern cohomology.

\medskip

The aforementioned results suggest the following conjecture.

\begin{conj}[{\cite[Conjecture 1]{rollenske-survey}; see also \cite[page 5406]{cordero-fernandez-gray-ugarte-2000}, \cite[page 112]{console-fino}}]\label{conj:dolbeault}
 Let $N=\left. \Gamma \right\backslash G$ be a nilmanifold endowed with a $G$-left-invariant complex structure $J$ and denote the Lie algebra naturally associated to $G$ by $\mathfrak{g}$.
Then the maps of complexes \eqref{eq:dolb-complessi} are quasi-isomorphisms, that is, they induce an isomorphism in cohomology:
$$ i\colon H^{\bullet,\bullet}_{\delbar}\left(\mathfrak{g}_\C\right) \stackrel{\simeq}{\longrightarrow} H^{\bullet,\bullet}_{\delbar}(N) \;.$$
\end{conj}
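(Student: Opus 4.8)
The plan is to reduce the conjecture, by induction on $\dim_\R\mathfrak{g}$, to the known special cases via a tower of holomorphic fibrations, in the same spirit as Nomizu's proof of Theorem \ref{thm:nomizu} and Rollenske's torus-bundle-series criterion recalled in Theorem \ref{thm:dolbeault-invariant}. Since every nilmanifold is an iterated principal torus bundle arising from the terms of the (rational) ascending central series of $\mathfrak{g}$, the first step is to lift such a tower to the holomorphic category: I would seek a chain of ideals $0=\mathfrak{a}_0\subset\mathfrak{a}_1\subset\cdots\subset\mathfrak{a}_s=\mathfrak{g}$ that is simultaneously $J$-invariant, rational with respect to the structure $\mathfrak{g}_\Q$ induced by $\Gamma$, and such that each successive quotient is abelian. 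Such a chain would realize $N$ as a tower whose typical step is a holomorphic principal bundle $\pi\colon N_k\to N_{k-1}$ with fiber a compact complex torus $T_k$.

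The second step is the cohomological comparison along a single fibration $\pi\colon N_k\to N_{k-1}$. For the fiber torus $T_k$ the Dolbeault cohomology is computed by invariant forms (the base case, essentially the K\"unneth formula for a complex torus). I would then set up the Borel (Leray-type) spectral sequence for the Dolbeault cohomology of $\pi$, both for the full complex $\left(\wedge^{p,\bullet}N_k,\,\delbar\right)$ and for its invariant subcomplex $\left(\wedge^{p,\bullet}\mathfrak{g}_\C^*,\,\delbar\right)$, obtaining a morphism of spectral sequences induced by the inclusion $i$. By the inductive hypothesis applied to $N_{k-1}$ together with the torus computation on the fiber, the map $i$ would be an isomorphism on the $E_2$-page; a degeneration argument would then force it to be an isomorphism on the abutment, yielding the quasi-isomorphism for $N_k$. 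Iterating up the tower gives the statement for $N=N_s$, and the openness packaging for \emph{generic} $J$ could be combined with Theorem \ref{thm:dolbeault-invariant-open}.

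The hard part --- indeed the reason the statement remains only conjectural --- is the very first step: for an arbitrary $G$-left-invariant $J$ there need not exist a filtration of $\mathfrak{g}$ by ideals that is at once $J$-invariant and $\mathfrak{g}_\Q$-rational. The complex structure may fail to preserve the terms of the rational central series, and even when one can arrange $J$-invariant ideals there is no reason they descend to $\mathfrak{g}_\Q$, so the would-be fibers degenerate to non-compact quotients rather than compact complex tori. This clash between the three structures --- the nilpotent Lie bracket, the integrable $J$, and the rational structure --- is exactly what the hypotheses in Theorem \ref{thm:dolbeault-invariant} each circumvent in a different way, and a proof of the full conjecture would require a new mechanism controlling the Borel spectral sequence without assuming the existence of such a compatible filtration. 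A secondary, though still delicate, point is the degeneration of the spectral sequence at the needed page: in the listed special cases it is forced by the extra structure, whereas in general it would have to be established independently.
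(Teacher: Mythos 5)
This statement is stated in the paper as a \emph{conjecture} (Conjecture \ref{conj:dolbeault}, quoted from \cite[Conjecture 1]{rollenske-survey}); the paper offers no proof of it, and to date none is known. Your proposal is therefore not comparable to a proof in the paper, and, as you yourself concede in your final paragraph, it is not a proof at all: the argument collapses at its first step, since for an arbitrary $G$-left-invariant integrable $J$ there is no known filtration of $\mathfrak{g}$ by ideals that is simultaneously $J$-invariant, rational for the structure $\mathfrak{g}_\Q$ induced by $\Gamma$, and with abelian quotients. What you have written is essentially an accurate description of \emph{why the known special cases hold} --- it is precisely Rollenske's torus-bundle-series mechanism, which is the last bullet of Theorem \ref{thm:dolbeault-invariant}, with the other bullets (holomorphically parallelizable, Abelian, nilpotent, rational $J$) being alternative ways of guaranteeing such a compatible tower --- rather than an argument for the general statement.

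Two further points of caution. First, your closing remark that ``the openness packaging for generic $J$ could be combined with Theorem \ref{thm:dolbeault-invariant-open}'' does not help prove the conjecture: openness of the set $\mathcal{U}$ of good complex structures gives nothing unless one already knows $\mathcal{U}$ is nonempty in the relevant component and, more to the point, says nothing about complex structures outside the closure of the known good ones. Second, even granting a compatible tower, your inductive step needs more care than ``a degeneration argument'': the comparison is correctly run via the five lemma (or a Zeeman-type comparison theorem) applied to the morphism of Borel spectral sequences, using injectivity of $i$ from \cite[Lemma 9]{console-fino} (recalled before Conjecture \ref{conj:dolbeault}) to control the abutment; no degeneration of either spectral sequence is actually required, only that the map be an isomorphism on the $E_2$-page. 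So the honest summary is: your diagnosis of the obstruction is correct and matches the literature, but the statement remains open and your text should be read as a survey of the known partial results, not as a proof.
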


\noindent Note that, since $i$ is always injective by \cite[Lemma 9]{console-fino}, this is equivalent to ask that
$$ \dim_\C \left(H^{\bullet,\bullet}_{\delbar}\left(\mathfrak{g}_\C\right)\right)^\perp=0 \;,$$
where the orthogonality is meant with respect to the scalar product induced by a given $J$-Hermitian $G$-left-invariant metric $g$ on $N$.

\subsection{Some results on computation for the Bott-Chern cohomology}\label{subsec:cohomology-computation-bott-chern}
We prove here some results about Bott-Chern cohomology computation for nilmanifolds and solvmanifolds.

The first result is a slight modification of \cite[Lemma 9]{console-fino} proved by S. Console and A. Fino for the Dolbeault cohomology: we repeat here their argument in the case of the Bott-Chern cohomology.

\begin{lemma}\label{lemma:inj}
 Let $X=\left. \Gamma \right\backslash G$ be a solvmanifold endowed with a $G$-left-invariant complex structure $J$ and denote the Lie algebra naturally associated to $G$ by $\mathfrak{g}$.
 The map of complexes \eqref{eq:bc-complessi} induces an injective homomorphism between cohomology groups
$$ i\colon H^{\bullet,\bullet}_{BC}\left(\mathfrak{g}_\C\right) \hookrightarrow H^{\bullet,\bullet}_{BC}(X) \;.$$
\end{lemma}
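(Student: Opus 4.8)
The plan is to adapt to the Bott-Chern setting the \emph{symmetrization} argument that Console and Fino use for the Dolbeault case in \cite[Lemma 9]{console-fino}. The whole proof will hinge on a single averaging operator $\mu\colon \wedge^{\bullet,\bullet}X \to \wedge^{\bullet,\bullet}\mathfrak{g}_\C^*$ onto the invariant forms, and I would reduce everything to three properties of $\mu$: it preserves the bidegree; it is a left inverse of the inclusion, i.e. $\mu\circ i = \id$ on $\wedge^{\bullet,\bullet}\mathfrak{g}_\C^*$; and it commutes with both $\del$ and $\delbar$, hence with $\del\delbar$. Granting these, the injectivity of $i$ in cohomology is a one-line diagram chase, so the real work is constructing $\mu$ and checking the commutation.

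First I would construct $\mu$. Since $G$ admits a lattice $\Gamma$, it is unimodular, so its Haar measure descends to a finite $G$-left-invariant volume form on $X$, which I normalize so that $\int_X \Vol = 1$. Fixing a $G$-left-invariant coframe adapted to $J$, every $\alpha\in\wedge^{p,q}X$ has a unique expansion $\alpha = \sum_I f_I\,\omega^I$ with $f_I\in\mathcal{C}^\infty(X;\C)$ and the $\omega^I$ invariant of type $(p,q)$; I set $\mu(\alpha) := \sum_I\left(\int_X f_I\,\Vol\right)\omega^I$. By construction $\mu(\alpha)$ is invariant and of type $(p,q)$, and $\mu$ restricts to the identity on invariant forms, giving the first two properties for free.

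The crux, and what I expect to be the main technical point, is that $\mu$ commutes with the differentials. Writing $\del\alpha = \sum_I \del f_I\wedge\omega^I + \sum_I f_I\,\del\omega^I$, the second sum is built from the invariant forms $\del\omega^I$, whose invariant (Chevalley--Eilenberg) coefficients pass through $\mu$ untouched, producing exactly $\del\mu(\alpha)$; while in the first sum each coefficient is of the form $Z_a f_I$ for a complexified left-invariant vector field $Z_a$. The decisive fact is that $\int_X Z_a f_I\,\Vol = 0$: unimodularity makes every left-invariant vector field divergence-free for $\Vol$, so its integral against the invariant volume form of a closed manifold vanishes. Hence $\mu$ kills the first sum and $\mu\circ\del = \del\circ\mu$; the same computation, exchanging bidegrees, gives $\mu\circ\delbar = \delbar\circ\mu$, and therefore $\mu\circ(\del\delbar) = (\del\delbar)\circ\mu$. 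This is precisely the step where unimodularity and the compactness of $X$ enter, and I would take care to state the divergence-free property cleanly before invoking it.

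Finally the injectivity. Let $\alpha\in\wedge^{p,q}\mathfrak{g}_\C^*$ represent a class in $H^{p,q}_{BC}(\mathfrak{g}_\C)$, so $\del\alpha = \delbar\alpha = 0$, and suppose $i[\alpha]=0$ in $H^{p,q}_{BC}(X)$, i.e. $\alpha = \del\delbar\beta$ for some $\beta\in\wedge^{p-1,q-1}X$. Applying $\mu$ and using the commutation together with $\mu(\alpha)=\alpha$ yields $\alpha = \del\delbar(\mu\beta)$ with $\mu\beta\in\wedge^{p-1,q-1}\mathfrak{g}_\C^*$ invariant; thus $[\alpha]=0$ already in $H^{p,q}_{BC}(\mathfrak{g}_\C)$, proving that $i$ is injective.
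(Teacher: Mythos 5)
Your proof is correct, but it follows a genuinely different route from the paper. You prove injectivity via the Belgun-type symmetrization operator $\mu(\alpha)=\sum_I\bigl(\int_X f_I\,\mathrm{Vol}\bigr)\,\omega^I$, whose key property $\mu\circ\de=\de\circ\mu$ (hence $\mu\circ\del=\del\circ\mu$ and $\mu\circ\delbar=\delbar\circ\mu$, since the $G$-left-invariant $J$ makes $\mu$ bidegree-preserving) rests on unimodularity of $G$ --- guaranteed by the existence of the lattice $\Gamma$ --- and Stokes' theorem, which together give $\int_X (Z_a f)\,\mathrm{Vol}=0$ for left-invariant $Z_a$. The paper instead adapts the Hodge-theoretic argument of Console and Fino: it fixes a $J$-Hermitian $G$-left-invariant metric, observes that $\del$, $\delbar$, $\del^*$, $\delbarstar$, and hence $\tilde\Delta_{BC}$, preserve invariant forms, and then shows that if an invariant $\omega$ equals $\del\delbar\eta$ with $\eta$ orthogonal to the invariant forms, then $0=\langle\delbarstar\del^*\del\delbar\eta\,|\,\eta\rangle=\|\omega\|^2$. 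Your approach is more elementary (no elliptic operators) and actually stronger: $\mu$ is a retraction of complexes, so it exhibits $\bigl(\wedge^{\bullet,\bullet}\mathfrak{g}_\C^*,\del,\delbar\bigr)$ as a direct summand of $\bigl(\wedge^{\bullet,\bullet}X,\del,\delbar\bigr)$ and yields split injectivity simultaneously for the de Rham, Dolbeault, Bott-Chern and Aeppli cohomologies. What the paper's route buys is that the same Hodge-theoretic setup (harmonic representatives for $\tilde\Delta_{BC}$, orthogonal complements of the invariant forms) is reused essentially verbatim in the openness result, Theorem \ref{thm:bc-invariant-open}, and in the explicit computations of \S\ref{sec:bott-chern-iwasawa}, so the lemma comes for free from machinery the paper needs anyway. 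Two small points you should state explicitly if you write this up: that a Lie group admitting a co-compact discrete subgroup is unimodular, and that $\mu$ is independent of the chosen invariant coframe (e.g.\ because $\mu(\alpha)(e_{i_1},\ldots,e_{i_k})=\int_X\alpha(e_{i_1},\ldots,e_{i_k})\,\mathrm{Vol}$ for invariant frame fields $e_{i_j}$).
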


\begin{proof}
 Fix $p,q\in\N$. Let $g$ be a $J$-Hermitian $G$-left-invariant metric on $X$ and consider the induced scalar product $\left\langle\left.\sspace\right|\sspace\right\rangle$ on $\wedge^{\bullet,\bullet} X$. Hence both $\del$, $\delbar$ and their adjoints $\del^*$, $\delbarstar$ preserve the $G$-left-invariant forms on $X$ and therefore also $\tilde\Delta_{BC}$ does. In such a way, we get a Hodge decomposition also at the level of $G$-left-invariant forms:
\begin{eqnarray*}
\wedge^{p,q}\mathfrak{g}_\C^* &=& \ker\tilde\Delta_{BC}\lfloor_{\wedge^{p,q}\mathfrak{g}_\C^*} \,\oplus\, \imm\del\delbar\lfloor_{\wedge^{p-1,q-1}\mathfrak{g}_\C^*} \\[5pt]
 && \oplus\, \left(\imm\del^*\lfloor_{\wedge^{p+1,q}\mathfrak{g}_\C^*} \,+\, \imm\delbarstar\lfloor_{\wedge^{p,q+1}\mathfrak{g}_\C^*}\right) \;.
\end{eqnarray*}
 Now, take $[\omega]\in H^{p,q}_{BC}\left(\mathfrak{g}_\C\right)$ such that $i\left[\omega\right]=0$ in $H^{p,q}_{BC}(X)$, that is, $\omega$ is a $G$-left-invariant $(p,q)$-form on $X$ and there exists a (possibly non-$G$-left-invariant) $(p-1,q-1)$-form $\eta$ on $X$ such that $\omega=\del\delbar\,\eta$. Up to zero terms in $H^{p,q}_{BC}\left(\mathfrak{g}_\C\right)$, we may assume that $\eta\in \left(i\left(\wedge^{p,q}\mathfrak{g}_\C^*\right)\right)^\perp\subseteq \wedge^{p,q}X$. Therefore, since $\delbarstar\del^*\del\delbar\eta$ is a $G$-left-invariant form (being $\del\delbar\eta$ a $G$-left-invariant form), we have that
$$ 0=\left\langle \left. \delbarstar\del^*\del\delbar\eta \right| \eta\right\rangle = \left\|\del\delbar\eta\right\|^2=\left\|\omega\right\|^2 $$
and therefore $\omega=0$.
\end{proof}

The second general result says that, if the Dolbeault and de Rham cohomologies of a solvmanifold are computed using just left-invariant forms, then also the Bott-Chern cohomology is computed using just left-invariant forms. The idea of the proof is inspired by \cite[\S1.c]{schweitzer}, where a similar argument is used to explicitly compute the Bott-Chern cohomology in the special case of the Iwasawa manifold.

\begin{thm}\label{thm:bc-invariant}
 Let $X=\left.\Gamma\right\backslash G$ be a solvmanifold endowed with a $G$-left-invariant complex structure $J$ and denote the Lie algebra naturally associated to $G$ by $\mathfrak{g}$. Suppose that
$$ i\colon H^\bullet_{dR}(\mathfrak{g};\C)\stackrel{\simeq}{\hookrightarrow} H^\bullet_{dR}(X;\C) \qquad \text{ and } \qquad i\colon H^{\bullet,\bullet}_{\delbar}\left(\mathfrak{g}_\C\right)\stackrel{\simeq}{\hookrightarrow} H^{\bullet,\bullet}_{\delbar}(X) \;.$$
Then also
$$ i\colon H^{\bullet,\bullet}_{BC}\left(\mathfrak{g}_\C\right) \stackrel{\simeq}{\hookrightarrow}H^{\bullet,\bullet}_{BC}(X) \;. $$
\end{thm}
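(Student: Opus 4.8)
The plan is to upgrade the injection $i\colon H^{\bullet,\bullet}_{BC}(\mathfrak{g}_\C)\hookrightarrow H^{\bullet,\bullet}_{BC}(X)$ of Lemma~\ref{lemma:inj} to an isomorphism, so only surjectivity has to be proved, and I would do this bidegree by bidegree. The engine is a reduction that uses the Dolbeault hypothesis together with Lemma~\ref{lemma:inj}. Let $\alpha$ be a $\de$-closed $(p,q)$-form on $X$. The hypothesis that $i\colon H^{p,q}_{\delbar}(\mathfrak{g}_\C)\stackrel{\simeq}{\to}H^{p,q}_{\delbar}(X)$ lets me write $\alpha=a'+\delbar w$ with $a'$ a $G$-left-invariant $\delbar$-closed form. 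Applying $\del$ gives $\del a'=-\del\delbar w$, so $\del a'$ is $G$-left-invariant, $\de$-closed, and $\del\delbar$-exact on $X$; by the injectivity of $i$ in bidegree $(p+1,q)$ it is already $\del\delbar$-exact through invariant forms, say $\del a'=\del\delbar v$. Replacing $a'$ by $a'-\delbar v$ I may assume $a'$ is $G$-left-invariant and $\de$-closed, while $\alpha-a'=\delbar w$ for some $(p,q-1)$-form $w$. As $\alpha-a'$ is $\de$-closed, $\del\delbar w=0$, so $w$ defines an Aeppli class $[w]_A\in H^{p,q-1}_A(X)$.

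Thus $[\alpha]_{BC}-i[a']_{BC}=[\delbar w]_{BC}$ lies in the image of the well-defined connecting map $H^{p,q-1}_A(X)\to H^{p,q}_{BC}(X)$, $[w]_A\mapsto[\delbar w]_{BC}$. Consequently, surjectivity of $i$ on $H^{p,q}_{BC}$ follows from surjectivity of $i$ on $H^{p,q-1}_A$: if $[w]_A=i[\tilde w]_A$ with $\tilde w$ invariant, then $w-\tilde w\in\imm\del+\imm\delbar$, hence $\delbar w-\delbar\tilde w\in\imm\del\delbar$ and $[\delbar w]_{BC}=i[\delbar\tilde w]_{BC}$ is invariant ($\delbar\tilde w$ being $\de$-closed because $\tilde w\in\ker\del\delbar$). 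The base case $q=0$ is immediate, since then there are no $(p,-1)$-forms, $w=0$, and $\alpha=a'$ is already invariant; by conjugation the case $p=0$ is analogous.

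To control the Aeppli side I would choose the Hermitian metric $G$-left-invariant, so that the Hodge-$*$ operator maps invariant forms to invariant forms and the isomorphism $H^{r,s}_{BC}\simeq H^{n-s,n-r}_A$ holds compatibly on $X$ and on $\mathfrak{g}_\C$; therefore $i$ is surjective on $H^{p,q-1}_A$ if and only if it is surjective on $H^{n-q+1,\,n-p}_{BC}$. Combined with the previous paragraph this yields, for $q\geq 1$, the implication: if $i$ is surjective on $H^{n-q+1,\,n-p}_{BC}$ then it is surjective on $H^{p,q}_{BC}$. Applying this implication twice sends the bidegree $(p,q)$ to $(p+1,q-1)$, so after finitely many steps the second index drops to $0$ (the base case) or some index exceeds $n$ (where the group is zero); the finite descent then closes the induction and proves the isomorphism in every bidegree. (I note in passing that the de Rham hypothesis is not strictly needed here: an inclusion inducing isomorphisms on all $H^{p,q}_{\delbar}$ induces an isomorphism of Frölicher spectral sequences and hence on the abutment $H^\bullet_{dR}$ as well.)

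The main obstacle is exactly the leftover term $[\delbar w]_{BC}$: it measures the failure of the $\del\delbar$-Lemma and cannot be killed by the $\delbar$-data alone, because it couples $H_{BC}$ to its Hodge-dual $H_A$, which is not part of the controlled cohomology. The delicate point is to break this coupling \emph{without} a $\del\delbar$-Lemma. Trading Aeppli surjectivity for Bott--Chern surjectivity at the Hodge-dual bidegree makes the statement self-referential, and the real content is to verify that this is not circular: the dual bidegree $(n-q+1,n-p)$ lies far from $(p,q)$, and two applications shift $(p,q)$ to $(p+1,q-1)$, so the recursion terminates at trivial or base cases instead of looping. The same bookkeeping can equivalently be organized as a dimension count along the Varouchas-type exact sequence relating $H_{BC}$, $H_{\delbar}$ and $H_A$ compatibly with $i$, using the injectivity of $i$ on both $H_{BC}$ and $H_A$.
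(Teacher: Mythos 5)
Your argument is correct, but it takes a genuinely different route from the paper's. The paper first invokes the de Rham hypothesis to reduce, via the exact sequence $0\to\left(\imm\de\cap\wedge^{p,q}X\right)/\imm\del\delbar\to H^{p,q}_{BC}(X)\to H^{p+q}_{dR}(X;\C)$, to classes of the form $\de\eta\bmod\imm\del\delbar$; it then splits $\eta$ into pure-type components, obtaining a zig-zag system $\delbar\eta^{r,s-1}+\del\eta^{r-1,s}\equiv0\pmod{\imm\del\delbar}$, and repeatedly applies the observation that, under the Dolbeault hypothesis, any solution $\phi$ of $\delbar\phi=\psi$ with $\psi$ invariant is invariant up to $\delbar$-exact terms. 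You instead never use the de Rham hypothesis (and your remark that it already follows from the Dolbeault hypothesis via the Fr\"olicher spectral sequence is correct, so your proof establishes a formally stronger statement): you isolate the defect $[\delbar w]_{BC}$ as the image of an Aeppli class in $H^{p,q-1}_{A}(X)$, convert Aeppli surjectivity into Bott--Chern surjectivity at the Hodge-dual bidegree through the $*$-operator of a $G$-left-invariant metric (which preserves invariance and intertwines the harmonic spaces of $\tilde\Delta_{BC}$ and $\tilde\Delta_{A}$ compatibly on the invariant and the full complexes, exactly as in the proof of Lemma \ref{lemma:inj}), and close the recursion $(p,q)\to(p+1,q-1)$ by finite descent to the elementary case $q=0$, where $\de$-closed $(p,0)$-forms are automatically invariant. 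Each implication in your chain uses only the Dolbeault hypothesis at the relevant bidegree together with the injectivity of $i$ on $H^{p+1,q}_{BC}$, so the recursion is not circular and terminates. What the paper's route buys is self-containedness: it needs no Hodge theory for $H_{A}$ and no compatibility of $*$ with invariance. What your route buys is the redundancy of the de Rham hypothesis and a clean identification of the sole obstruction to surjectivity as an Aeppli class at bidegree $(p,q-1)$.
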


\begin{proof}
 Fix $p,q\in\N$. We prove the theorem as a consequence of the following steps.
\smallskip
\paragraph{\bfseries Step 1} {\itshape We may reduce to study if $\frac{\imm\de\cap\wedge^{p,q}X}{\imm\del\delbar}$ can be computed using just $G$-left-invariant forms.}\\
Indeed, we have the exact sequence
$$ 0 \to \frac{\imm\de\cap\wedge^{p,q}X}{\imm\del\delbar} \to H^{p,q}_{BC}(X) \to H^{p+q}_{dR}(X;\C) $$
and, by hypothesis, $H^\bullet_{dR}(X;\C)$ can be computed using just $G$-left-invariant forms.
\smallskip
 \paragraph{\bfseries Step 2} {\itshape Under the hypothesis that the Dolbeault cohomology is computed using just $G$-left-invariant forms, if $\psi$ is a $G$-left-invariant $\delbar$-closed form then every solution $\phi$ of $\delbar\phi=\psi$ is $G$-left-invariant up to $\delbar$-exact terms.}\\
Indeed, since $[\psi]=0$ in $H^{\bullet,\bullet}_{\delbar}(X)$, there is a $G$-left-invariant form $\alpha$ such that $\psi=\delbar\alpha$. Hence, $\phi-\alpha$ defines a class in $H^{\bullet,\bullet}_{\delbar}(X)$ and hence $\phi-\alpha$ is $G$-left-invariant up to a $\delbar$-exact form, and so $\phi$ is.
\smallskip
 \paragraph{\bfseries Step 3} {\itshape Under the hypothesis that the Dolbeault cohomology is computed using just $G$-left-invariant forms, the space $\frac{\imm\de\cap\wedge^{p,q}X}{\imm\del\delbar}$ can be computed using just $G$-left-invariant forms.}\\
Consider
\begin{equation}\label{eq:proof-step-3}
\omega^{p,q}\;=\;\de\eta\mod\imm\del\delbar\;\in\;\frac{\imm\de\cap\wedge^{p,q}X}{\imm\del\delbar} \;.
\end{equation}
Decomposing $\eta=:\sum_{p,q}\eta^{p,q}$ in pure-type components, the equality \eqref{eq:proof-step-3} is equivalent to the system
$$
\left\{
\begin{array}{cccccccc}
 && \del\eta^{p+q-1,0} &=&0 & \mod \imm\del\delbar && \\[5pt]
\delbar\eta^{p+q-\ell,\ell-1} &+& \del\eta^{p+q-\ell-1,\ell} &=& 0 &\mod\imm\del\delbar & \text{ for } & \ell\in\{1,\ldots,q-1\} \\[5pt]
\delbar\eta^{p,q-1} &+& \del\eta^{p-1,q} &=& \omega^{p,q} & \mod\imm\del\delbar && \\[5pt]
\delbar\eta^{\ell,p+q-\ell-1} &+& \del\eta^{\ell-1,p+q-\ell} &=& 0 &\mod\imm\del\delbar & \text{ for } & \ell\in\{1,\ldots,p-1\} \\[5pt]
\delbar\eta^{0,p+q-1} &&&=&0 &\mod\imm\del\delbar &&
\end{array}
\right. \;.
$$
Applying several times {\itshape Step 2}, we may suppose that, for $\ell\in\{0,\ldots, p-1\}$, the forms $\eta^{\ell,p+q-\ell-1}$ are $G$-left-invariant: indeed, they are $G$-left-invariant up to $\delbar$-exact terms, but $\delbar$-exact terms give no contribution in the system, since it is modulo $\imm\del\delbar$. Analogously, using the conjugate version of {\itshape Step 2}, we may suppose that, for $\ell\in\{0,\ldots,q-1\}$, the forms $\eta^{p+q-\ell-1,\ell}$ are $G$-left-invariant. Then we may suppose that $\omega^{p,q}=\delbar\eta^{p,q-1}+\del\eta^{p-1,q}$ is $G$-left-invariant.
\end{proof}

As a corollary of Theorem \ref{thm:nomizu}, Theorem \ref{thm:dolbeault-invariant} and Theorem \ref{thm:bc-invariant}, we get the following result.
\begin{thm}\label{thm:bc-invariant-cor}
 Let $N=\left.\Gamma\right\backslash G$ be a nilmanifold endowed with a $G$-left-invariant complex structure $J$ and denote the Lie algebra naturally associated to $G$ by $\mathfrak{g}$. Suppose that one of the following conditions holds:
\begin{itemize}
 \item $N$ is holomorphically parallelizable;
 \item $J$ is an Abelian complex structure;
 \item $J$ is a nilpotent complex structure;
 \item $J$ is a rational complex structure;
 \item $\mathfrak{g}$ admits a torus-bundle series compatible with $J$ and with the rational structure induced by $\Gamma$.
\end{itemize}
Then the de Rham, Dolbeault, Bott-Chern and Aeppli cohomologies can be computed as the cohomologies of the corresponding subcomplexes given by the space of $G$-left-invariant forms on $N$; in other words, the inclusions of the several subcomplexes of $G$-left-invariant forms on $N$ into the corresponding complexes of forms on $N$ are quasi-isomorphisms:
$$ i\colon H^\bullet_{dR}\left(\mathfrak{g};\R\right) \;\stackrel{\simeq}{\hookrightarrow}\; H^\bullet_{dR}(N;\R) \qquad \text{ and }\qquad i\colon H^{\bullet,\bullet}_{\star}\left(\mathfrak{g}_\C\right) \;\stackrel{\simeq}{\hookrightarrow}\; H^{\bullet,\bullet}_{\star}\left(N\right) \;,$$
for $\star\in\{\del,\,\delbar,\,BC,\,A\}$.
\end{thm}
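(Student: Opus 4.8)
The plan is to obtain this statement as a purely formal consequence of the three results just recalled, treating the four cohomology theories in turn. For the de Rham cohomology, Nomizu's theorem (Theorem \ref{thm:nomizu}) already gives $i\colon H^\bullet_{dR}(\mathfrak{g};\R)\xrightarrow{\simeq}H^\bullet_{dR}(N;\R)$ for \emph{every} nilmanifold, with no condition on $J$; tensoring this isomorphism of $\R$-vector spaces with $\C$ and using $H^\bullet_{dR}(-;\C)=H^\bullet_{dR}(-;\R)\otimes_\R\C$ yields the complexified statement $i\colon H^\bullet_{dR}(\mathfrak{g};\C)\xrightarrow{\simeq}H^\bullet_{dR}(N;\C)$. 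For the Dolbeault cohomology, each of the five listed conditions is exactly one of the hypotheses under which Theorem \ref{thm:dolbeault-invariant} guarantees $i\colon H^{\bullet,\bullet}_{\delbar}(\mathfrak{g}_\C)\xrightarrow{\simeq}H^{\bullet,\bullet}_{\delbar}(N)$. Thus, under any of the five assumptions, both invariant-computability statements required as hypotheses of Theorem \ref{thm:bc-invariant} hold.

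With these two isomorphisms in hand, the hypotheses of Theorem \ref{thm:bc-invariant} are satisfied (recall that a nilmanifold is in particular a solvmanifold), and its conclusion gives immediately $i\colon H^{\bullet,\bullet}_{BC}(\mathfrak{g}_\C)\xrightarrow{\simeq}H^{\bullet,\bullet}_{BC}(N)$. This settles the cases $\star\in\{dR,\delbar,BC\}$.

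It remains to deduce the Aeppli case. I would fix a $J$-Hermitian $G$-left-invariant metric $g$ on $N$ and invoke the Hodge-$*$ duality $H^{p,q}_{BC}(X)\simeq H^{n-q,n-p}_{A}(X)$ recorded at the end of \S\ref{subsec:hodge-theory-aeppli}. The key point is that, because $g$ is $G$-left-invariant, the associated Hodge-$*$-operator sends $G$-left-invariant forms to $G$-left-invariant forms; hence the same Hodge-theoretic argument, applied to the finite-dimensional double complex $\left(\wedge^{\bullet,\bullet}\mathfrak{g}_\C^*,\del,\delbar\right)$ with the induced inner product, produces the analogous duality $H^{p,q}_{BC}(\mathfrak{g}_\C)\simeq H^{n-q,n-p}_{A}(\mathfrak{g}_\C)$ at the invariant level. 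Moreover $*$ commutes with the inclusion $i$, since $*\omega$ is invariant whenever $\omega$ is: on an invariant class one has $i\left(*[\omega]\right)=[*\omega]=*\left(i[\omega]\right)$. The two dualities therefore fit into a commutative square whose two horizontal arrows are the isomorphisms induced by $*$ and whose left vertical arrow, the Bott-Chern map $i\colon H^{p,q}_{BC}(\mathfrak{g}_\C)\to H^{p,q}_{BC}(N)$, has just been shown to be an isomorphism. Consequently the right vertical arrow $i\colon H^{n-q,n-p}_{A}(\mathfrak{g}_\C)\to H^{n-q,n-p}_{A}(N)$ is an isomorphism too; letting $p,q$ range over all bidegrees gives the claim for $\star=A$.

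The proof is essentially an assembly of the earlier theorems, so I do not anticipate a serious obstacle. The only step requiring genuine care is the Aeppli argument, where one must verify that the Bott-Chern/Aeppli Hodge-$*$ duality truly restricts to the subcomplex of invariant forms---which hinges on the invariance of $*$ for an invariant metric and on the fact that $\tilde\Delta_{BC}$ and $\tilde\Delta_{A}$ preserve invariant forms, exactly as exploited in the proof of Lemma \ref{lemma:inj}---and that the resulting square commutes. As an alternative to duality, one could instead rerun the type-decomposition argument of Steps 1--3 of Theorem \ref{thm:bc-invariant} directly on the Aeppli complex, whose defining complex is structurally dual to the Bott-Chern one; I would nonetheless present the duality route as the more economical of the two.
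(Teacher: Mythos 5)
Your proposal is correct and follows exactly the route the paper intends: the theorem is stated there as a direct corollary of Theorem \ref{thm:nomizu}, Theorem \ref{thm:dolbeault-invariant} and Theorem \ref{thm:bc-invariant}, with the Aeppli case handled, as you do, by the Hodge-$*$ duality $H^{p,q}_{BC}\simeq H^{n-q,n-p}_{A}$ for a $G$-left-invariant Hermitian metric, which preserves invariant forms. The only case you leave implicit is $\star=\del$, which follows from the $\delbar$ case by conjugation, since conjugation preserves $G$-left-invariance.
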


\medskip

A slight modification of \cite[Theorem 1]{console-fino} by S. Console and A. Fino gives the following result, which says that the property of computing the Bott-Chern cohomology using just left-invariant forms is open in the space of left-invariant complex structures on solvmanifolds.

\begin{thm}\label{thm:bc-invariant-open}
 Let $X=\left. \Gamma \right\backslash G$ be a solvmanifold endowed with a $G$-left-invariant complex structure $J$ and denote the Lie algebra naturally associated to $G$ by $\mathfrak{g}$. Let $\star\in\{\del,\,\delbar,\,BC,\,A\}$. Suppose that
$$ i\colon H^{\bullet,\bullet}_{\star_J}\left(\mathfrak{g}_\C\right) \;\stackrel{\simeq}{\hookrightarrow}\; H^{\bullet,\bullet}_{\star_J}\left(X\right) \;.$$
Then there exists an open neighbourhood $\mathcal{U}$ of $J$ in $\mathcal{C}\left(\mathfrak{g}\right)$ such that any $\tilde J\in \mathcal{U}$ still satisfies
$$ i\colon H^{\bullet,\bullet}_{\star_{\tilde{J}}}\left(\mathfrak{g}_\C\right) \;\stackrel{\simeq}{\hookrightarrow}\; H^{\bullet,\bullet}_{\star_{\tilde{J}}}\left(X\right) \;.$$
In other words, the set
$$ \mathcal{U} \;:=\; \left\{ J\in\mathcal{C}\left(\mathfrak{g}\right) \st i\colon H^{\bullet,\bullet}_{\star_J}\left(\mathfrak{g}_\C\right) \;\stackrel{\simeq}{\hookrightarrow}\; H^{\bullet,\bullet}_{\star_J}\left(X\right) \right\} $$
is open in $\mathcal{C}\left(\mathfrak{g}\right)$.
\end{thm}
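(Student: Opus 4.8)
The plan is to adapt the upper-semicontinuity argument that proves Theorem~\ref{thm:dolbeault-invariant-open}. By Lemma~\ref{lemma:inj} (and its evident analogues for $\star\in\{\del,\delbar,A\}$, the case $\star=\delbar$ being exactly \cite[Lemma 9]{console-fino}), the map $i$ is always injective; hence $i$ is an isomorphism if and only if the integer
$$ d_\perp(J)\;:=\;\dim_\C H^{\bullet,\bullet}_{\star_J}(X)\,-\,\dim_\C H^{\bullet,\bullet}_{\star_J}\left(\mathfrak{g}_\C\right) $$
vanishes. I would show that $J\mapsto d_\perp(J)$ is upper-semicontinuous on $\mathcal{C}\left(\mathfrak{g}\right)$; since it is $\N$-valued, its zero locus $\mathcal{U}=\{d_\perp<1\}$ is then open.

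First I would fix a continuously varying family of $G$-left-invariant $J$-Hermitian metrics, for instance $g_J:=g_0(\sspace,\sspace)+g_0(J\sspace,J\sspace)$ built from a fixed invariant background metric $g_0$; then the self-adjoint elliptic operator $\tilde\Delta_{\star_J}$ (the ordinary Laplacian for $\star\in\{\del,\delbar\}$, and $\tilde\Delta_{BC}$, resp.\ $\tilde\Delta_A$, for $\star=BC$, resp.\ $\star=A$) depends continuously on $J$ and, exactly as in the proof of Lemma~\ref{lemma:inj}, preserves the finite-dimensional space of $G$-left-invariant forms. Hodge theory then gives $H^{\bullet,\bullet}_{\star_J}(X)\simeq\ker\tilde\Delta_{\star_J}$, while the invariant part $\ker\tilde\Delta_{\star_J}\cap\wedge^{\bullet,\bullet}\mathfrak{g}_\C^*$ computes $H^{\bullet,\bullet}_{\star_J}\left(\mathfrak{g}_\C\right)$. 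Being self-adjoint and invariant-form-preserving, $\tilde\Delta_{\star_J}$ also preserves the $L^2(g_J)$-orthogonal complement $W_J$ of the invariant forms, so the harmonic space splits orthogonally and $d_\perp(J)=\dim_\C\left(\ker\tilde\Delta_{\star_J}\cap W_J\right)$.

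The heart of the proof is the upper-semicontinuity of $d_\perp$. Given $J_n\to J_0$, set $m:=\limsup_n d_\perp(J_n)$, which is finite since $\dim_\C\ker\tilde\Delta_{\star_J}$ is bounded near $J_0$, and along a realizing subsequence choose $g_{J_n}$-orthonormal harmonic forms $h_n^{(1)},\dots,h_n^{(m)}\in W_{J_n}$. Uniform elliptic estimates for the family $\tilde\Delta_{\star_J}$, valid for $J$ in a neighbourhood of $J_0$, bound these forms in every Sobolev norm; by Rellich's theorem a subsequence converges smoothly to forms that are $\tilde\Delta_{\star_{J_0}}$-harmonic and $g_{J_0}$-orthonormal, in particular nonzero. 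The crucial point is that each $h_n^{(j)}$ is orthogonal to the \emph{fixed}, finite-dimensional space of invariant forms: since the invariant $(p,q)$-subspaces vary continuously with $J$ inside the $J$-independent total space $\wedge^{\bullet}\mathfrak{g}_\C^*$ and $g_{J_n}\to g_{J_0}$, this orthogonality passes to the limit, so the limit forms lie in $W_{J_0}$ and are harmonic. Hence $d_\perp(J_0)\ge m=\limsup_n d_\perp(J_n)$, which is the desired semicontinuity. Equivalently, at a point $J_0\in\mathcal{U}$ one has $\ker\tilde\Delta_{\star_{J_0}}\cap W_{J_0}=0$, so the existence of such a nonzero limit would be a contradiction, proving directly that $d_\perp$ vanishes near $J_0$.

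The main obstacle is precisely that the metric, the Hodge decomposition, the bidegree splitting and the complement $W_J$ all move with $J$, so classes at different complex structures cannot be compared naively; indeed a crude comparison of $\dim H^{\bullet,\bullet}_{\star_J}(X)$ and $\dim H^{\bullet,\bullet}_{\star_J}\left(\mathfrak{g}_\C\right)$ fails, since both are separately upper-semicontinuous. What rescues the limiting argument is that orthogonality is imposed against the \emph{finite-dimensional} space of invariant forms, against which it is a closed condition stable under convergence. The remaining technical care lies in setting up the Hodge decomposition and the uniform elliptic estimates for the fourth-order operators $\tilde\Delta_{BC}$ and $\tilde\Delta_A$, which is exactly where the argument extends \cite[Theorem 1]{console-fino} beyond the Dolbeault case.
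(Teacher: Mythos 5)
Your proposal is correct and follows the same overall reduction as the paper: both arguments exploit that $i$ is always injective (Lemma \ref{lemma:inj} and its analogues), identify the defect with the dimension of the space of $\tilde\Delta_{\star_J}$-harmonic forms lying in the orthogonal complement of the $G$-left-invariant forms (a complement preserved by $\tilde\Delta_{\star_J}$ because the operator preserves invariant forms and is self-adjoint), and then prove that this dimension is upper-semicontinuous in $J$, so that its vanishing is an open condition. Where you diverge is in the mechanism for the semicontinuity. The paper works with a (complex-analytic, smoothly metrized) family $J_t$ and invokes the Kodaira--Spencer spectral machinery: a complete orthonormal eigenbasis of $\Delta_t$ spanning $\left(\wedge^{\bullet,\bullet}_{J_t}\mathfrak{g}_\C^*\right)^\perp$, continuity of the eigenvalues in $t$, discreteness of the spectrum, and the locally constant counting function $\Psi_c(t)=\dim\Span\left\{e_i(t)\st a_i(t)<c\right\}$, which dominates the dimension of the harmonic complement and equals $0$ at $t=0$ for $c$ small. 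You instead run a direct sequential compactness argument: orthonormal harmonic forms along $J_n\to J_0$, uniform elliptic estimates, Rellich, and passage to the limit of harmonicity, orthonormality, and orthogonality to the (finite-dimensional, continuously varying) invariant subspace. Both routes are classical and both rest ultimately on elliptic estimates for the fourth-order operators $\tilde\Delta_{BC}$ and $\tilde\Delta_{A}$; yours is more self-contained and applies verbatim to arbitrary sequences in $\mathcal{C}\left(\mathfrak{g}\right)$ rather than to members of a differentiable family, while the paper's buys the semicontinuity off the shelf from \cite{kodaira-spencer-3} at the cost of phrasing the perturbation as a family over a polydisc. Your observation that one cannot simply subtract the two separately upper-semicontinuous dimensions, and that the saving grace is orthogonality against a fixed finite-dimensional space, is exactly the point that makes either version of the argument work.
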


\begin{proof}
 As a matter of notation, for $\varepsilon>0$ small enough, we consider
 $$ \left\{\left(X,\,J_t\right) \st t\in \Delta(0,\varepsilon)\right\} \twoheadrightarrow \Delta(0,\varepsilon) $$
 a complex-analytic family of $G$-left-invariant complex structures on $X$, where $\Delta(0,\varepsilon):=\left\{t\in\C^m\st \left|t\right|<\varepsilon\right\}$ for some $m\in\N\setminus\{0\}$; moreover, let $\left\{g_t\right\}_{t\in \Delta(0,\varepsilon)}$ be a family of $J_t$-Hermitian $G$-left-invariant metrics on $X$ depending smoothly on $t$. We will denote by $\delbar_t:=\delbar_{J_t}$ and $\delbar_t^*:=-*_{g_t}\del_{J_t}*_{g_t}$ the delbar operator and its $g_t$-adjoint respectively for the Hermitian structure $\left(J_t,\,g_t\right)$ and we set $\Delta_t:=\Delta_{\star_{J_t}}$ one of the differential operators involved in the definition of the Dolbeault, conjugate Dolbeault, Bott-Chern or Aeppli cohomologies with respect to $\left(J_t,\,g_t\right)$; we remark that $\Delta_t$ is a self-adjoint elliptic differential operator for all the considered cohomologies.

 By hypothesis, we have that $\left(H^{\bullet,\bullet}_{\star_{J_0}}\left(\mathfrak{g}_\C\right)\right)^\perp=\{0\}$, where the orthogonality is meant with respect to the scalar product induced by $g_0$, and we have to prove the same replacing $0$ with $t\in\Delta(0,\varepsilon)$. Therefore, it will suffice to prove that
$$ \Delta(0,\varepsilon)\ni t \mapsto \dim_\C\left(H^{\bullet,\bullet}_{\star_{J_t}}\left(\mathfrak{g}_\C\right)\right)^\perp\in\N $$
is an upper-semi-continuous function at $0$.\\
 For any $t\in\Delta(0,\varepsilon)$, being $\Delta_t$ a self-adjoint elliptic differential operator, there exists a complete orthonormal basis $\{e_i(t)\}_{i\in I}$ of eigenforms for $\Delta_t$ spanning $\left(\wedge^{\bullet,\bullet}_{J_t}\mathfrak{g}_\C^*\right)^\perp$, the orthogonal complement of the space of $G$-left-invariant forms, see \cite[Theorem 1]{kodaira-spencer-3}. For any $i\in I$ and $t\in\Delta(0,\varepsilon)$, let $a_i(t)$ be the eigenvalue corresponding to $e_i(t)$; $\Delta_t$ depending differentiably on $t\in\Delta(0,\varepsilon)$, for any $i\in I$, the function $\Delta(0,\varepsilon)\ni t \mapsto a_i(t)\in\C$ is continuous, see \cite[Theorem 2]{kodaira-spencer-3}. Therefore, for any $t_0\in\Delta(0,\varepsilon)$, choosing a constant $c>0$ such that $c\not\in \overline{\left\{a_i(t_0) \st i\in I\right\}}$, the function
 $$ \Psi_c\colon\Delta(0,\varepsilon)\to\N \;,\qquad t \mapsto \dim \Span\left\{e_i(t) \st a_i(t)<c\right\} $$
is locally constant at $t_0$; moreover, for any $t\in\Delta(0,\varepsilon)$ and for any $c>0$, we have
$$ \Psi_c(t) \;\geq\; \dim_\C \left(H^{\bullet,\bullet}_{\star_{J_t}}\left(\mathfrak{g}_\C\right)\right)^\perp \;.$$
Since the spectrum of $\Delta_{t_0}$ has no accumulation point for any $t_0\in\Delta(0,\varepsilon)$, see \cite[Theorem 1]{kodaira-spencer-3}, the theorem follows choosing $c>0$ small enough so that $\Psi_c(0)=\dim_\C\left(H^{\bullet,\bullet}_{\star_{J_0}}\left(\mathfrak{g}_\C\right)\right)^\perp$.
\end{proof}

\medskip

In particular, Theorem \ref{thm:bc-invariant-cor} and Theorem \ref{thm:bc-invariant-open} say that the following conjecture, which generalizes Conjecture \ref{conj:dolbeault}, holds for certain left-invariant complex structures on nilmanifolds.
\begin{conj}\label{conj:BC}
 Let $N=\left. \Gamma \right\backslash G$ be a nilmanifold endowed with a $G$-left-invariant complex structure $J$ and denote the Lie algebra naturally associated to $G$ by $\mathfrak{g}$. Then the de Rham, Dolbeault, Bott-Chern and Aeppli cohomologies can be computed as the cohomologies of the corresponding subcomplexes given by the space of $G$-left-invariant forms on $N$, that is,
$$ \dim_\R \left(H^\bullet_{dR}\left(\mathfrak{g};\R\right)\right)^{\perp} \;=\; 0 \qquad \text{ and }\qquad \dim_\C \left(H^{\bullet,\bullet}_{\star}\left(\mathfrak{g}_\C\right)\right)^{\perp}\;=\;0 \;,$$
where $\star\in\{\del,\,\delbar,\,BC,\,A\}$ and the orthogonality is meant with respect to the scalar product induced by a given $J$-Hermitian $G$-left-invariant metric $g$ on $N$.
\end{conj}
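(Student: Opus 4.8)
The plan is to reduce the four cohomologies to a single genuinely open problem, the Dolbeault case, and then to indicate the inductive scheme by which one would attack that. First I would dispose of the de Rham part: by Nomizu's Theorem \ref{thm:nomizu} the inclusion $i\colon H^\bullet_{dR}(\mathfrak{g};\R)\hookrightarrow H^\bullet_{dR}(N;\R)$ is always an isomorphism, and tensoring with $\C$ gives the same statement over $\C$. Next, the conjugate-Dolbeault case $\star=\del$ follows from the case $\star=\delbar$ by complex conjugation, since conjugation identifies $H^{p,q}_{\del}$ with $H^{q,p}_{\delbar}$ and carries invariant forms to invariant forms. Thus the only residual input is the Dolbeault Conjecture \ref{conj:dolbeault}, i.e. $\dim_\C\left(H^{\bullet,\bullet}_{\delbar}(\mathfrak{g}_\C)\right)^\perp=0$.

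Granting Conjecture \ref{conj:dolbeault}, the two remaining cohomologies come for free from results already in hand. Since both the de Rham and the Dolbeault cohomologies of $N$ are then computed by invariant forms, Theorem \ref{thm:bc-invariant} applies verbatim and yields $i\colon H^{\bullet,\bullet}_{BC}(\mathfrak{g}_\C)\stackrel{\simeq}{\hookrightarrow}H^{\bullet,\bullet}_{BC}(N)$. For the Aeppli case I would invoke the Hodge-$*$ duality $H^{p,q}_{BC}(N)\simeq H^{n-q,n-p}_{A}(N)$ recalled in \S\ref{subsec:hodge-theory-aeppli}: choosing the invariant Hermitian metric $g$, the operator $*_g$ preserves the space of invariant forms, so it restricts to the corresponding isomorphism $H^{p,q}_{BC}(\mathfrak{g}_\C)\simeq H^{n-q,n-p}_{A}(\mathfrak{g}_\C)$ and is compatible with $i$; since every Aeppli bidegree arises as $(n-q,n-p)$ for some Bott-Chern bidegree $(p,q)$, the fact that $i$ is an isomorphism for Bott-Chern forces it to be one for Aeppli as well. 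This reduces the entire statement to Conjecture \ref{conj:dolbeault}.

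For the Dolbeault step the natural strategy mirrors the proofs of the known cases and of Nomizu's theorem: induct on the nilpotency step of $\mathfrak{g}$ using the presentation of $N$ as an iterated principal torus bundle arising from the ascending central series, $0\to\mathfrak{t}\to\mathfrak{g}\to\mathfrak{b}\to 0$ with $\mathfrak{b}$ of smaller step. I would compare the Hochschild-Serre (Lie-algebra) spectral sequence computing $H^{\bullet,\bullet}_{\delbar}(\mathfrak{g}_\C)$ with the Borel spectral sequence of the holomorphic fibration $T\to N\to B$ computing $H^{\bullet,\bullet}_{\delbar}(N)$: the inductive hypothesis handles the base $B$, the fibre is a complex torus and is immediate, and the map of spectral sequences induced by $i$ is the identity on the fibre terms. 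Combined with the injectivity of $i$ furnished by Lemma \ref{lemma:inj} in its Dolbeault form (\cite[Lemma 9]{console-fino}), a comparison-and-collapse argument would upgrade injectivity to surjectivity at each stage.

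The hard part, and the reason the statement remains a conjecture, is precisely the compatibility of $J$ with this tower. The Borel spectral sequence requires the central fibre $\mathfrak{t}$ to be a rationally defined, $J$-invariant complex subtorus, so that the fibration is holomorphic and the base $B$ inherits an integrable invariant complex structure; this is exactly the hypothesis that ``$\mathfrak{g}$ admits a torus-bundle series compatible with $J$'' in Rollenske's contribution to Theorem \ref{thm:dolbeault-invariant}. For a general invariant $J$ there need be no rational $J$-holomorphic central filtration, so the induction cannot even be started. Circumventing this obstruction --- perhaps through a degeneration argument for the Fr\"olicher spectral sequence at the invariant level, or through a direct $\delbar$-harmonic-theory argument tracking invariance across the Hodge decomposition --- is the genuine obstacle, and no general method for it is presently known.
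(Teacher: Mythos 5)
The first thing to say is that the statement you were asked to prove is labelled a \emph{conjecture} in the paper: the paper offers no proof of it, only the partial evidence of Theorem \ref{thm:bc-invariant-cor} (which establishes it for holomorphically parallelizable, Abelian, nilpotent, rational, and torus-bundle-series-compatible structures) and the openness statement of Theorem \ref{thm:bc-invariant-open}. What your proposal actually establishes is the implication ``Conjecture \ref{conj:dolbeault} $\Rightarrow$ Conjecture \ref{conj:BC},'' and that implication is correct: the de Rham part is Nomizu's Theorem \ref{thm:nomizu} (complexified); the case $\star=\del$ follows from $\star=\delbar$ by conjugation, which preserves $G$-left-invariance; the Bott-Chern case is then exactly Theorem \ref{thm:bc-invariant}; and the Aeppli case follows because the Hodge-$*$-operator of the invariant metric $g$ preserves invariant forms and intertwines $\tilde\Delta_{BC}$ with $\tilde\Delta_{A}$, so $\left(H^{p,q}_{BC}\left(\mathfrak{g}_\C\right)\right)^\perp \simeq \left(H^{n-q,n-p}_{A}\left(\mathfrak{g}_\C\right)\right)^\perp$ (this also supplies the injectivity of $i$ in the Aeppli case from Lemma \ref{lemma:inj}, which you use implicitly when phrasing the conclusion as $\dim_\C\left(H^{\bullet,\bullet}_{A}\left(\mathfrak{g}_\C\right)\right)^\perp=0$). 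This reduction is precisely the mechanism by which the paper derives Theorem \ref{thm:bc-invariant-cor} from Theorem \ref{thm:dolbeault-invariant}, so up to this point you are reproducing the paper's own logic, not proving anything beyond it.

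The genuine gap is therefore exactly where you locate it: the Dolbeault step. Your proposed induction --- comparing the Hochschild-Serre spectral sequence of $0\to\mathfrak{t}\to\mathfrak{g}\to\mathfrak{b}\to 0$ with the Borel spectral sequence of a holomorphic torus fibration --- is the strategy underlying the proved cases of Theorem \ref{thm:dolbeault-invariant}, but it requires the fibre to be a rationally defined $J$-invariant subalgebra at every stage, i.e.\ a torus-bundle series compatible with $J$ and the rational structure induced by $\Gamma$. For a general $G$-left-invariant $J$ no such tower need exist, so the induction cannot start, and no alternative argument (degeneration of the Fr\"olicher spectral sequence at the invariant level, or otherwise) is known. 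You are candid about this, which is the right assessment; but the consequence is that your submission is a correct conditional reduction plus a survey of the known obstruction, not a proof. Had you claimed completeness, the failure point would be the unjustified collapse-and-comparison step for arbitrary $J$; as written, the proposal should be understood as confirming that Conjecture \ref{conj:BC} is open precisely to the extent that Conjecture \ref{conj:dolbeault} is.
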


\section{The Iwasawa manifold and its small deformations}\label{part:iwasawa}

\subsection{The Iwasawa manifold}\label{sec:iwasawa}
Let $\mathbb{H}(3;\mathbb{C})$ be the $3$-dimensional \emph{Heisenberg group} over $\mathbb{C}$ defined by
$$
\mathbb{H}(3;\mathbb{C}) := \left\{
\left(
\begin{array}{ccc}
 1 & z^1 & z^3 \\
 0 &  1  & z^2 \\
 0 &  0  &  1
\end{array}
\right) \in \mathrm{GL}(3;\mathbb{C}) \st z^1,\,z^2,\,z^3 \in\C \right\}
\;,
$$
where the product is the one induced by matrix multiplication.
It is straightforward to prove that $\mathbb{H}(3;\C)$ is a connected simply-connected complex $2$-step nilpotent Lie group, that is, the Lie algebra $\left(\mathfrak{h}_3,\,\left[\sspace,\sspace\right]\right)$ naturally associated to $\mathbb{H}(3;\C)$ satisfies $\left[\mathfrak{h}_3,\mathfrak{h}_3\right]\neq 0$ and $\left[\mathfrak{h}_3,\left[\mathfrak{h}_3,\mathfrak{h}_3\right]\right]=0$.\\
One finds that
$$
\left\{
\begin{array}{rcl}
 \varphi^1 &:=& \de z^1 \\[5pt]
 \varphi^2 &:=& \de z^2 \\[5pt]
 \varphi^3 &:=& \de z^3-z^1\,\de z^2
\end{array}
\right.
$$
is a $\mathbb{H}(3;\C)$-left-invariant co-frame for the space of $(1,0)$-forms on $\mathbb{H}(3;\C)$ and that the structure equations with respect to this co-frame are
$$
\left\{
\begin{array}{rcl}
 \de\varphi^1 &=& 0 \\[5pt]
 \de\varphi^2 &=& 0 \\[5pt]
 \de\varphi^3 &=& -\varphi^1\wedge\varphi^2
\end{array}
\right. \;.
$$
For the sake of completeness, we write also
$$
\left\{
\begin{array}{rcl}
 \del\,\varphi^1 &=& 0 \\[5pt]
 \del\,\varphi^2 &=& 0 \\[5pt]
 \del\,\varphi^3 &=& -\varphi^1\wedge\varphi^2
\end{array}
\right. \qquad \text{ and } \qquad
\left\{
\begin{array}{rcl}
 \delbar\,\varphi^1 &=& 0 \\[5pt]
 \delbar\,\varphi^2 &=& 0 \\[5pt]
 \delbar\,\varphi^3 &=& 0
\end{array}
\right. \;.
$$

\smallskip

Consider the action on the left of $\mathbb{H}\left(3;\Z\left[\im\right]\right):=\mathbb{H}(3;\C)\cap\GL\left(3;\Z\left[\im\right]\right)$ on $\mathbb{H}\left(3;\C\right)$ and take the compact quotient
$$ \mathbb{I}_3 := \left. \mathbb{H}\left(3;\Z\left[\im\right]\right) \right\backslash \mathbb{H}(3;\C)\;. $$
One gets that $\mathbb{I}_3$ is a $3$-dimensional complex nilmanifold, whose ($\mathbb{H}(3;\C)$-left-invariant) complex structure $J_{\zero}$ is the one inherited by the standard complex structure on $\C^3$; $\mathbb{I}_3$ is called the \emph{Iwasawa manifold}.\\
Since the forms $\varphi^1$, $\varphi^2$ and $\varphi^3$ are $\mathbb{H}(3;\C)$-left-invariant, they define a co-frame also for $\left(T^{1,0}\I_3\right)^*$. Note that $\I_3$ is a holomorphically parallelizable manifold, that is, its holomorphic tangent bundle is holomorphically trivial.

Since, for example, $\varphi^3$ is a non-closed holomorphic form, it follows that $\I_3$ admits no K\"ahler metric. In fact, one can show that $\I_3$ is not formal, see \cite[page 158]{fernandez-gray}, therefore the underlying smooth manifold of $\I_3$ has no complex structure admitting K\"ahler metrics, even though all the topological obstructions concerning the Betti numbers are satisfied.

We sketch in Figure \ref{fig:iwasawa} the structure of the finite-dimensional double complex $\left(\wedge^{\bullet,\bullet}\left(\mathfrak{h}_3\otimes_\R\C\right)^*,\,\del,\,\delbar\right)$: horizontal arrows are meant as $\del$, vertical ones as $\delbar$ and zero arrows are not depicted.

\smallskip
\begin{center}
\begin{figure}[ht]
 \includegraphics[width=6.5cm]{iwasawa.1}
 \caption{The double complex $\left(\wedge^{\bullet,\bullet}\left(\mathfrak{h}_3\otimes_\R \C\right)^*,\,\del,\,\delbar\right)$.}
 \label{fig:iwasawa}
\end{figure}
\end{center}
\smallskip

\subsection{Small deformations of the Iwasawa manifold}\label{sec:deformations-iwasawa}	
I. Nakamura classified in \cite[\S2]{nakamura} the three-dimensional holomorphically parallelizable solvmanifolds into four classes by numerical invariants, giving $\mathbb{I}_3$ as an example in the second class. Moreover, he explicitly constructed the Kuranishi family of deformations of $\I_3$, showing that it is smooth, \cite[pages 94--95]{nakamura}, compare also \cite[Corollary 4.9]{rollenske-jems}; in particular, he computed the Hodge numbers of the small deformations of $\mathbb{I}_3$ proving that they have not to remain invariant along a complex-analytic family of complex structures, \cite[Theorem 2]{nakamura}, compare also \cite[\S4]{ye}; moreover, he proved in this way that the property of being holomorphically parallelizable is not stable under small deformations, \cite[page 86]{nakamura}, compare also \cite[Theorem 5.1, Corollary 5.2]{rollenske-jems}. I. Nakamura divided the small deformations of $\mathbb{I}_3$ into three classes according to their Hodge diamond, see \cite[page 96]{nakamura} (see 
also \cite[\S17]{ueno}).

\smallskip

\begin{thm}[{\cite[pages 94--96]{nakamura}}]
There exists a locally complete complex-analytic family of complex structures $\left\{X_\tempo=\left(\I_3,\,J_\tempo\right)\right\}_{\tempo\in \Delta(\zero,\varepsilon)}$, deformations of $\I_3$, depending on six parameters
$$ \tempo\;=\;\left(t_{11},\,t_{12},\,t_{21},\,t_{22},\,t_{31},\,t_{32}\right)\;\in\;\Delta(\zero,\varepsilon)\;\subset\;\C^6 \;,$$
where $\varepsilon>0$ is small enough, $\Delta(\zero,\varepsilon):=\left\{\mathbf{s}\in\C^6 \st \left|\mathbf{s}\right|<\varepsilon\right\}$ and $X_\zero=\I_3$.\\
A set of holomorphic coordinates for $X_\tempo$ is given by
$$
\left\{
\begin{array}{rcccl}
 \zeta^1 &:=& \zeta^1(\tempo) &:=& z^1+\sum_{k=1}^{2}t_{1k}\,\bar z^k \\[5pt]
 \zeta^2 &:=& \zeta^2(\tempo) &:=& z^2+\sum_{k=1}^{2}t_{2k}\,\bar z^k \\[5pt]
 \zeta^3 &:=& \zeta^3(\tempo) &:=& z^3+\sum_{k=1}^{2}\left(t_{3k}+t_{2k}\,z^1\right)\bar z^k+A\left(\bar z^1,\,\bar z^2\right)-D\left(\tempo\right)\,\bar z^3
\end{array}
\right.
$$
where
$$ D\left(\tempo\right):=\det\left(
\begin{array}{cc}
 t_{11} & t_{12} \\
 t_{21} & t_{22}
\end{array}
\right)
\qquad
$$
and
$$ A\left(\bar z^1,\,\bar z^2\right):=\frac{1}{2}\left(t_{11}\,t_{21}\,\left(\bar z^1\right)^2+2\,t_{11}\,t_{22}\,\bar z^1\,\bar z^2+t_{12}\,t_{22}\,\left(\bar z^2\right)^2\right) \;. $$
For every $\tempo\in \Delta(\zero,\varepsilon)$, the universal covering of $X_\tempo$ is $\C^3$; more precisely,
$$ X_\tempo \;=\; \left. \Gamma_\tempo \right \backslash\C^3 \;,$$
where $\Gamma_\tempo$ is the subgroup generated by the transformations
$$ \left(\zeta^1,\,\zeta^2,\,\zeta^3\right) \stackrel{\left(\omega^1,\,\omega^2,\,\omega^3\right)}{\mapsto} \left(\tilde\zeta^1,\,\tilde\zeta^2,\,\tilde\zeta^3\right) \;,$$
varying $\left(\omega^1,\,\omega^2,\,\omega^3\right)\in\left(\Z\left[\im\right]\right)^3$, where
$$
\left\{
\begin{array}{rcl} 
 \tilde\zeta^1 &:=& \zeta^1+\left(\omega^1+t_{11}\,\bar\omega^1+t_{12}\,\bar\omega^2\right) \\[10pt]
 \tilde\zeta^2 &:=& \zeta^2+\left(\omega^2+t_{21}\,\bar\omega^1+t_{22}\,\bar\omega^2\right) \\[10pt]
 \tilde\zeta^3 &:=& \zeta^3+\left(\omega^3+t_{31}\,\bar\omega^1+t_{32}\,\bar\omega^2\right)+\omega^1\,\zeta^2\\[5pt]
&&+\left(t_{21}\,\bar\omega^1+ t_{22}\,\bar\omega^2\right)\left(\zeta^1+\omega^1\right)+A\left(\bar \omega^1,\,\bar\omega^2\right)-D\left(\tempo\right)\,\bar\omega^3
\end{array}
\right. \;.
$$
\end{thm}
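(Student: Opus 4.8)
The plan is to invoke Kuranishi deformation theory, exploiting the holomorphic parallelizability of $\I_3$ to turn every step into an explicit finite computation at the level of invariant forms. First I would compute $H^1(\I_3,\Theta)$, the space of infinitesimal deformations, where $\Theta$ is the holomorphic tangent bundle. Since $\I_3$ is holomorphically parallelizable, $\Theta$ is holomorphically trivial with global frame $\{\theta_1,\theta_2,\theta_3\}$ dual to $\{\ph{1},\ph{2},\ph{3}\}$ and satisfying the Heisenberg relations $[\theta_1,\theta_2]=\theta_3$, $[\theta_1,\theta_3]=[\theta_2,\theta_3]=0$; hence $H^1(\I_3,\Theta)\cong H^{0,1}_{\delbar}(\I_3)^{\oplus 3}$. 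By Sakane's result (Theorem \ref{thm:dolbeault-invariant}), $H^{0,1}_{\delbar}(\I_3)$ is computed from invariant forms, and the structure equations $\delbar\bp{1}=\delbar\bp{2}=0$, $\delbar\bp{3}=-\bp{1}\wedge\bp{2}$ give $H^{0,1}_{\delbar}(\I_3)=\langle[\bp{1}],[\bp{2}]\rangle$, of dimension $2$. Thus $\dim_\C H^1(\I_3,\Theta)=6$, with basis $\{\bp{k}\otimes\theta_i\}_{k\in\{1,2\},\,i\in\{1,2,3\}}$ indexed exactly by the six parameters $t_{ik}$.

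Next I would build the family by solving the Maurer--Cartan equation $\delbar\phi+\frac12[\phi,\phi]=0$ for a Beltrami differential $\phi(\tempo)\in A^{0,1}(\I_3;\Theta)$ with prescribed linear part $\phi_1(\tempo):=\sum_{i,k}t_{ik}\,\bp{k}\otimes\theta_i$. Since the $\theta_i$ are holomorphic and $\bp{1},\bp{2}$ are $\delbar$-closed, $\delbar\phi_1=0$. The only obstruction appears at second order: using the Heisenberg bracket together with the vanishing of the Lie-derivative terms (which hold because $\theta_i\lrcorner\bp{l}=0$ and $\de\bp{l}=0$ for $l\in\{1,2\}$), one finds $\frac12[\phi_1,\phi_1]=D(\tempo)\,\bp{1}\wedge\bp{2}\otimes\theta_3$, with $D(\tempo)=t_{11}t_{22}-t_{12}t_{21}$. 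This class is $\delbar$-exact, since $\bp{1}\wedge\bp{2}=-\delbar\bp{3}$, so setting $\phi_2(\tempo):=D(\tempo)\,\bp{3}\otimes\theta_3$ yields $\delbar\phi_2=-\frac12[\phi_1,\phi_1]$. Because $\theta_3$ is central and $\theta_i\lrcorner\bp{3}=0$, the remaining brackets $[\phi_1,\phi_2]$ and $[\phi_2,\phi_2]$ vanish, so $\phi(\tempo):=\phi_1(\tempo)+\phi_2(\tempo)$ is an \emph{exact} polynomial solution of the Maurer--Cartan equation, requiring no higher corrections. This shows the deformation is unobstructed and the Kuranishi family is smooth of dimension $6$ over $\Delta(\zero,\varepsilon)$.

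Then I would translate $\phi(\tempo)$ into the explicit coordinates: a function is $J_\tempo$-holomorphic precisely when the $(0,1)$-part of its differential equals the contraction of $\phi(\tempo)$ with its $(1,0)$-part, i.e.\ $\delbar\zeta=\phi(\tempo)\,\del\zeta$, and integrating this (integrable) system on the universal cover $\C^3$ produces $\zeta^1,\zeta^2,\zeta^3$. The linear terms $\sum_k t_{ik}\bar z^k$ come from $\phi_1$; the term $t_{2k}z^1\bar z^k$ in $\zeta^3$ reflects $\theta_2=\del_{z^2}+z^1\del_{z^3}$; the quadratic $A(\bar z^1,\bar z^2)$ arises upon integration; and the $-D(\tempo)\bar z^3$ term records $\phi_2$. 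Matching yields precisely the stated formulas, and $\de\zeta^1,\de\zeta^2,\de\zeta^3$ stay $\C$-linearly independent for $\tempo$ small, so they define $J_\tempo$.

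Finally I would identify the quotient presentation and establish local completeness. Rewriting the generators of the lattice $\mathbb{H}(3;\Z[\im])$ in the $\zeta$-coordinates produces the transformations defining $\Gamma_\tempo$, which one checks to be $J_\tempo$-biholomorphic, so $X_\tempo=\Gamma_\tempo\backslash\C^3$ is a compact complex manifold diffeomorphic to $\I_3$. Local completeness then follows from the Kodaira--Spencer theorem of completeness, since the Kodaira--Spencer map sends $\del/\del t_{ik}|_{\tempo=\zero}$ to $[\bp{k}\otimes\theta_i]$ and is therefore an isomorphism onto $H^1(\I_3,\Theta)$. The main obstacle is the second-order obstruction computation --- verifying that $[\phi_1,\phi_1]$ equals $2D(\tempo)\,\bp{1}\wedge\bp{2}\otimes\theta_3$ on the nose and is annihilated by the closed-form correction $\phi_2$, with all higher brackets vanishing; once this is settled, everything else is either standard Kuranishi theory or routine coordinate bookkeeping.
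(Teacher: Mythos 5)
The paper offers no proof of this statement---it is imported verbatim from Nakamura \cite[pages 94--96]{nakamura}---and your sketch is essentially a faithful reconstruction of Nakamura's original argument: the identification $H^1\left(\I_3,\Theta\right)\simeq H^{0,1}_{\delbar}\left(\I_3\right)\otimes H^0\left(\I_3,\Theta\right)$ coming from holomorphic parallelizability, the polynomial solution $\phi_1+\phi_2$ of the Maurer--Cartan equation whose single second-order obstruction $D\left(\tempo\right)\,\bar\varphi^1\wedge\bar\varphi^2\otimes\theta_3$ is killed by a multiple of $\bar\varphi^3\otimes\theta_3$ (all higher brackets vanishing because $\theta_3$ is central and the invariant $(0,1)$-forms are annihilated by the $(1,0)$-frame), and integration of $\delbar\zeta=\phi\lrcorner\,\del\zeta$ on the universal cover. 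The only caveat is a harmless sign-convention mismatch: with the holomorphicity condition $\delbar\zeta=\phi\lrcorner\,\del\zeta$ and Nakamura's term $-D\left(\tempo\right)\bar z^3$ in $\zeta^3$, one must take $\phi_2=-D\left(\tempo\right)\,\bar\varphi^3\otimes\theta_3$, i.e.\ the convention $\delbar\phi=\frac12\left[\phi,\phi\right]$ rather than $\delbar\phi+\frac12\left[\phi,\phi\right]=0$.
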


Bott-Chern cohomology distinguishes a finer classification of small deformations of $\I_3$ than Nakamura's classification into three classes, see \S\ref{sec:bott-chern-iwasawa}; the classes and subclasses of this classification are characterized by the following values of the parameters:\\
\begin{description}
 \item[class {\itshape (i)}] $t_{11}=t_{12}=t_{21}=t_{22}=0$; \\
 \item[class {\itshape (ii)}] $D\left(\tempo\right)=0$ and $\left(t_{11},\,t_{12},\,t_{21},\,t_{22}\right)\neq \left(0,\,0,\,0,\,0\right)$: 
    \begin{description}
     \item[subclass {\itshape (ii.a)}] $D\left(\tempo\right)=0$ and $\rk S=1$;
     \item[subclass {\itshape (ii.b)}] $D\left(\tempo\right)=0$ and $\rk S=2$;\\
    \end{description}
 \item[class {\itshape (iii)}] $D\left(\tempo\right)\neq 0$:
    \begin{description}
     \item[subclass {\itshape (iii.a)}] $D\left(\tempo\right)\neq 0$ and $\rk S=1$;
     \item[subclass {\itshape (iii.b)}] $D\left(\tempo\right)\neq 0$ and $\rk S=2$;\\
    \end{description}
\end{description}
the matrix $S$ is defined by
$$ S \;:=\;
\left(
\begin{array}{cccc}
 \overline{\sigma_{1\bar1}} & \overline{\sigma_{2\bar2}} & \overline{\sigma_{1\bar2}} & \overline{\sigma_{2\bar1}} \\
 \sigma_{1\bar1} & \sigma_{2\bar2} & \sigma_{2\bar1} & \sigma_{1\bar2}
\end{array}
\right)
$$
where $\sigma_{1\bar1},\,\sigma_{1\bar2},\,\sigma_{2\bar1},\,\sigma_{2\bar2}\in\C$ and $\sigma_{12}\in\C$ are complex numbers depending only on $\tempo$ such that
$$ \de\varphi^3_\tempo \;=:\; \sigma_{12}\,\varphi^1_{\tempo}\wedge\varphi^2_{\tempo}+\sigma_{1\bar1}\,\varphi^1_{\tempo}\wedge\bar\varphi^1_{\tempo}+\sigma_{1\bar2}\,\varphi^1_{\tempo}\wedge\bar\varphi^2_{\tempo}+\sigma_{2\bar1}\,\varphi^2_{\tempo}\wedge\bar\varphi^1_{\tempo}+\sigma_{2\bar2}\,\varphi^2_{\tempo}\wedge\bar\varphi^2_{\tempo} \;,$$
being
$$ \varphi^1_\tempo \;:=\; \de\zeta^1_{\tempo}\;,\quad \varphi^2_\tempo \;:=\; \de\zeta^2_{\tempo}\;,\quad \varphi^3_{\tempo} \;:=\; \de\zeta^3_{\tempo}-z_1\,\de\zeta^2_{\tempo}-\left(t_{21}\,\bar z^1+t_{22}\,\bar z^2\right)\de\zeta^1_{\tempo} \;,$$
see \S\ref{sec:structure-equations-iwasawa}.
The first order asymptotic behaviour of $\sigma_{12},\,\sigma_{1\bar1},\,\sigma_{1\bar2},\,\sigma_{2\bar1},\,\sigma_{2\bar2}$ for $\tempo$ near $0$ is the following:
\begin{equation}\label{eq:struttura-asintotica}
\left\{
\begin{array}{rcl}
\sigma_{12} &=& -1 +\opiccolo{\tempo} \\[5pt]
\sigma_{1\bar1} &=& t_{21} +\opiccolo{\tempo}  \\[5pt]
\sigma_{1\bar2} &=& t_{22} +\opiccolo{\tempo}  \\[5pt]
\sigma_{2\bar1} &=& -t_{11} +\opiccolo{\tempo} \\[5pt]
\sigma_{2\bar2} &=& -t_{12} +\opiccolo{\tempo} \\[5pt]
\end{array}
\right.
\qquad \text{ for } \qquad \tempo\in\text{ classes {\itshape (i)}, {\itshape (ii)} and {\itshape (iii)}}
\;,
\end{equation}
and, more precisely, for deformations in class {\itshape (ii)} we actually have that
\begin{equation}\label{eq:struttura-asintotica-ii}
\left\{
\begin{array}{rcl}
\sigma_{12} &=& -1 +\opiccolo{\tempo} \\[5pt]
\sigma_{1\bar1} &=& t_{21} \left(1+\opiccolouno\right)  \\[5pt]
\sigma_{1\bar2} &=& t_{22} \left(1+\opiccolouno\right)  \\[5pt]
\sigma_{2\bar1} &=& -t_{11} \left(1+\opiccolouno\right) \\[5pt]
\sigma_{2\bar2} &=& -t_{12} \left(1+\opiccolouno\right) \\[5pt]
\end{array}
\right. \qquad \text{ for } \qquad \tempo\in\text{ class {\itshape (ii)}}\;.
\end{equation}
\medskip

The complex manifold $X_\tempo$ is endowed with the $J_\tempo$-Hermitian $\mathbb{H}(3;\C)$-left-invariant metric $g_\tempo$, which is defined as follows:
$$ g_\tempo \;:=\; \sum_{j=1}^3\varphi_\tempo^j\odot\bar\varphi_\tempo^j \;. $$

\subsection{Computation of the structure equations for small deformations of the Iwasawa manifold}\label{sec:structure-equations-iwasawa}
In this section, we give the structure equations for the small deformations of the Iwasawa manifold; we will use these computations in \S\ref{sec:dolbeault-iwasawa} and \S\ref{sec:bott-chern-iwasawa}.

Consider
$$
\left\{
\begin{array}{rcl}
 \varphi^1_{\tempo} &:=& \de\zeta^1_{\tempo} \\[5pt]
 \varphi^2_{\tempo} &:=& \de\zeta^2_{\tempo} \\[5pt]
 \varphi^3_{\tempo} &:=& \de\zeta^3_{\tempo}-z_1\,\de\zeta^2_{\tempo}-\left(t_{21}\,\bar z^1+t_{22}\,\bar z^2\right)\de\zeta^1_{\tempo}
\end{array}
\right. \;.
$$
as a co-frame of $(1,0)$-forms on $X_{\tempo}$. We want to write the structure equations for $X_{\tempo}$ with respect to this co-frame.\\
For the complex structures in the class {\itshape (i)}, one checks that the structure equations are the same as the ones for $\I_3$, that is
$$
\left\{
\begin{array}{rcl}
 \de\varphi^1_{\tempo} &=& 0 \\[10pt]
 \de\varphi^2_{\tempo} &=& 0 \\[10pt]
 \de\varphi^3_{\tempo} &=& -\varphi^1_{\tempo}\wedge\varphi^2_{\tempo}
\end{array}
\right. \qquad \text{ for } \quad \tempo\in\text{ class {\itshape (i)}}
\;.
$$
For small deformations in classes {\itshape (ii)} and {\itshape (iii)}, we have that
$$
\left\{
\begin{array}{rcl}
 \de\varphi^1_{\tempo} &=& 0 \\[10pt]
 \de\varphi^2_{\tempo} &=& 0 \\[10pt]
 \de\varphi^3_{\tempo} &=& \sigma_{12}\,\varphi^1_{\tempo}\wedge\varphi^2_{\tempo}\\[5pt]
 &&+\sigma_{1\bar1}\,\varphi^1_{\tempo}\wedge\bar\varphi^1_{\tempo}+\sigma_{1\bar2}\,\varphi^1_{\tempo}\wedge\bar\varphi^2_{\tempo}\\[5pt]
 &&+\sigma_{2\bar1}\,\varphi^2_{\tempo}\wedge\bar\varphi^1_{\tempo}+\sigma_{2\bar2}\,\varphi^2_{\tempo}\wedge\bar\varphi^2_{\tempo}
\end{array}
\right. \qquad \text{ for } \qquad \tempo\in\text{ classes {\itshape (ii)} and {\itshape (iii)}} \;,
$$
where $\sigma_{12},\,\sigma_{1\bar1},\,\sigma_{1\bar2},\,\sigma_{2\bar1},\,\sigma_{2\bar2}\in\C$ are complex numbers depending only on $\tempo$; their explicit values in the case of class {\itshape (ii)} have been written down by the author and A. Tomassini in \cite[page 416]{angella-tomassini} and in the case of class {\itshape (iii)} are a bit more involved; here, we need just the first order asymptotic behavior for $\tempo$ near $0$ given in \eqref{eq:struttura-asintotica} and \eqref{eq:struttura-asintotica-ii}.

\section{The cohomologies of the Iwasawa manifold and of its small deformations}\label{part:cohomology-iwasawa}

\subsection{The de Rham cohomology of the Iwasawa manifold and of its small deformations}\label{sec:derham-iwasawa}
By a result by C. Ehresmann, every complex-analytic family of compact complex manifolds is trivial as a differentiable family of compact differentiable manifolds, see, e.g., \cite[Theorem 4.1]{kodaira-morrow}. Therefore the de Rham cohomology of small deformations of the Iwasawa manifold is the same as the de Rham cohomology of $\I_3$, which one can compute with the aid of Nomizu's theorem, \cite[Theorem 1]{nomizu}.\\
In the table below, we list the harmonic representatives with respect to the metric $g_\zero$ instead of their classes and we shorten the notation (as we will do also in the following) writing, for example, $\varphi^{A\bar B}:=\varphi^A\wedge\bar\varphi^B$.

\smallskip
\begin{center}
\begin{tabular}{c||c|c}
$H^k_{dR}\left(\I_3;\C\right)$ & $g_{\zero}$-harmonic representatives & dimension\\[5pt]
\hline
\hline
$k=1$ & $\varphi^1$, $\varphi^2$, $\bar{\varphi}^1$, $\bar{\varphi}^2$ & $4$\\[5pt]
$k=2$ & $\varphi^{13}$, $\varphi^{23}$, $\varphi^{1\bar{1}}$, $\varphi^{1\bar{2}}$, $\varphi^{2\bar{1}}$, $\varphi^{2\bar{2}}$,$\varphi^{\bar{1}\bar{3}}$, $\varphi^{\bar{2}\bar{3}}$ & $8$\\[5pt]
$k=3$ & $\varphi^{123}$, $\varphi^{13\bar{1}}$, $\varphi^{13\bar{2}}$, $\varphi^{23\bar{1}}$, $\varphi^{23\bar{2}}$, $\varphi^{1\bar{1}\bar{3}}$, $\varphi^{1\bar{2}\bar{3}}$, $\varphi^{2\bar{1}\bar{3}}$, $\varphi^{2\bar{2}\bar{3}}$, $\varphi^{\bar{1}\bar{2}\bar{3}}$ & $10$\\[5pt]
$k=4$ & $\varphi^{123\bar{1}}$, $\varphi^{123\bar{2}}$, $\varphi^{13\bar{1}\bar{3}}$, $\varphi^{13\bar{2}\bar{3}}$, $\varphi^{23\bar{1}\bar{3}}$, $\varphi^{23\bar{2}\bar{3}}$, $\varphi^{1\bar{1}\bar{2}\bar{3}}$, $\varphi^{2\bar{1}\bar{2}\bar{3}}$ & $8$\\[5pt]
$k=5$ & $\varphi^{123\bar{1}\bar{3}}$, $\varphi^{123\bar{2}\bar{3}}$, $\varphi^{13\bar{1}\bar{2}\bar{3}}$, $\varphi^{23\bar{1}\bar{2}\bar{3}}$ & $4$
\end{tabular}
\end{center}
\smallskip

Note that all the $g_\zero$-harmonic representatives of $H^\bullet_{dR}(\mathbb{I}_3;\R)$ are of pure type with respect to $J_{\zero}$, that is, they are in $\left(\wedge^{p,q}\mathbb{I}_3\oplus \wedge^{q,p}\mathbb{I}_3\right) \cap \wedge^{p+q}\mathbb{I}_3$ for some $p,q\in\{0,\,1,\,2,\,3\}$. This holds no more true for $J_\tempo$ with $\tempo\neq\zero$ small enough, see \cite[Theorem 5.1]{angella-tomassini}.

\subsection{The Dolbeault cohomology of the Iwasawa manifold and of its small deformations}\label{sec:dolbeault-iwasawa}
The Hodge numbers of the Iwasawa manifold and of its small deformations have been computed by I. Nakamura in \cite[page 96]{nakamura}. The $g_\tempo$-harmonic representatives for $H^{\bullet,\bullet}_{\delbar}\left(X_{\tempo}\right)$, for $\tempo$ small enough, can be computed using the considerations in \S\ref{subsec:cohomology-computation-derham-dolbeault} and the structure equations given in \S\ref{sec:structure-equations-iwasawa}. We collect here the results of the computations.\\
In order to reduce the number of cases under consideration, note that, on a compact complex Hermitian manifold $X$ of complex dimension $n$, for any $p,q\in\N$, the Hodge-$*$-operator induces an isomorphism between
$$ H^{p,q}_{\delbar}(X) \stackrel{\simeq}{\longrightarrow} H^{n-q,n-p}_{\del}(X)\simeq \overline{H^{n-p,n-q}_{\delbar}(X)} \;.$$

\medskip

As regards $1$-forms, we have that
$$
\dim_\C H^{1,0}_{\delbar}(X_\tempo)\;=\;
\left\{
\begin{array}{lcl}
3 & \quad \text{ for } \quad & \tempo\in\text{ class {\itshape (i)}}\\[5pt]
2 & \quad \text{ for } \quad & \tempo\in\text{ classes {\itshape (ii)} and {\itshape (iii)}}
\end{array}
\right. \;,
$$
which in particular means that $X_{\tempo}$ is not holomorphically parallelizable for $\tempo$ in classes {\itshape (ii)} and {\itshape (iii)}, see \cite[pages 86, 96]{nakamura},
and that
$$
\dim_\C H^{0,1}_{\delbar}(X_\tempo)\;=\;2 \quad \text{ for } \quad \tempo\in\text{ classes {\itshape (i)}, {\itshape (ii)} and {\itshape (iii)}} \;.
$$

\medskip

\noindent As regards $2$-forms, we have that
$$
\dim_\C H^{2,0}_{\delbar}(X_\tempo)\;=\;
\left\{
\begin{array}{lcl}
 3 & \quad \text{ for } \quad & \tempo\in\text{ class {\itshape (i)}} \\[5pt]
 2 & \quad \text{ for } \quad & \tempo\in\text{ class {\itshape (ii)}} \\[5pt]
 1 & \quad \text{ for } \quad & \tempo\in\text{ class {\itshape (iii)}}
\end{array}
\right.
$$
and
$$
\dim_\C H^{1,1}_{\delbar}(X_\tempo)\;=\;
\left\{
\begin{array}{lcl}
 6 & \quad \text{ for } \quad & \tempo\in\text{ class {\itshape (i)}} \\[5pt]
 5 & \quad \text{ for } \quad & \tempo\in\text{ classes {\itshape (ii)} and {\itshape (iii)}}
\end{array}
\right.
$$
and
$$
\dim_\C H^{0,2}_{\delbar}(X_\tempo)\;=\;2 \quad \text{ for } \quad \tempo\in\text{ classes {\itshape (i)}, {\itshape (ii)} and {\itshape (iii)}} \;.
$$

\medskip

\noindent Finally, as regards $3$-forms, we have that
$$
\dim_\C H^{3,0}_{\delbar}(X_{\tempo}) \;=\; 1 \quad \text{ for } \quad \tempo\in\text{ classes {\itshape (i)}, {\itshape (ii)} and {\itshape (iii)}}
$$
and
$$
\dim_\C H^{2,1}_{\delbar}(X_{\tempo})\;=\;
\left\{
\begin{array}{lcl}
6 & \quad \text{ for } \quad & \tempo\in\text{ class {\itshape (i)}} \\[5pt]
5 & \quad \text{ for } \quad & \tempo\in\text{ class {\itshape (ii)}} \\[5pt]
4 & \quad \text{ for } \quad & \tempo\in\text{ class {\itshape (iii)}}
\end{array}
\right. \;.
$$

\subsection{The Bott-Chern and Aeppli cohomologies of the Iwasawa manifold and of its small deformations}\label{sec:bott-chern-iwasawa}
In this section, using the results of Theorem \ref{thm:bc-invariant-cor} and Theorem \ref{thm:bc-invariant-open}, we explicitly compute the dimensions and we compute the $g_\tempo$-harmonic representatives for $H_{BC}^{\bullet,\bullet}(X_{\tempo})$, for $\tempo$ small enough.
\\
In order to reduce the number of cases under consideration, note that, on a compact complex Hermitian manifold $X$ of complex dimension $n$, for every $p,q\in\N$, the conjugation induces an isomorphism between
$$ H^{p,q}_{BC}(X) \;\simeq\; H^{q,p}_{BC}(X) $$
and the Hodge-$*$-operator induces an isomorphism between
$$ H^{p,q}_{BC}(X) \stackrel{\simeq}{\longrightarrow} H^{n-q,n-p}_{A}(X) \;;$$
furthermore, note that
$$ H^{p,0}_{BC}(X) \;\simeq\; \ker\left(\de\colon \wedge^{p,0}X\to\wedge^{p+1}(X;\C)\right) $$
and
$$ H^{n,0}_{BC}(X) \;\simeq\; H^{n,0}_{\delbar}(X) \;.$$

\medskip

We summarize the results of the computations below in the following theorem.
\begin{thm}\label{thm:bc-iwasawa}
 Let $\I_3$ be the Iwasawa manifold and consider its small deformations $\left\{X_\tempo=\left(\I_3,\,J_\tempo\right)\right\}_{\tempo\in\Delta(\zero,\varepsilon)}$, where $\varepsilon>0$ is small enough and $X_\zero=\I_3$. Then the dimensions $h^{p,q}_{BC}:=h^{p,q}_{BC}\left(X_\tempo\right):=\dim_\C H^{p,q}_{BC}\left(X_\tempo\right)=\dim_\C H^{3-p,3-q}_{A}\left(X_\tempo\right)$ does not depend on $\tempo\in\Delta(\zero,\varepsilon)$ if $p+q$ is odd or $(p,q)\in\left\{(1,1),\,(3,1),\,(1,3)\right\}$ and they are equal to
$$
\begin{array}{ccccccccccc}
 h^{1,0}_{BC} &=& h^{0,1}_{BC} &=& 2 \;, & & & & & & \\[5pt]
 h^{2,0}_{BC} &=& h^{0,2}_{BC} &\in& \left\{1,\,2,\,3\right\}\;, & \qquad & & & h^{1,1}_{BC} & = & 4\;, \\[5pt]
 h^{3,0}_{BC} &=& h^{0,3}_{BC} &=& 1 \;, & \qquad & h^{2,1}_{BC} & = & h^{1,2}_{BC} & = & 6\;, \\[5pt]
 h^{3,1}_{BC} &=& h^{1,3}_{BC} &=& 2 \;, & \qquad & & & h^{2,2}_{BC} & \in & \left\{6,\,7,\,8\right\} \;, \\[5pt]
 h^{3,2}_{BC} &=& h^{2,3}_{BC} &=& 3 \;. & & & & & &
\end{array}
$$
\end{thm}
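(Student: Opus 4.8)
The plan is to pass from the full de Rham complex to the finite-dimensional invariant one and then run explicit linear algebra bidegree by bidegree. Since $\I_3 = X_\zero$ is holomorphically parallelizable, Theorem \ref{thm:bc-invariant-cor} shows that its Bott-Chern and Aeppli cohomologies are computed by the subcomplex $\wedge^{\bullet,\bullet}\mathfrak{g}_\C^*$ of $\mathbb{H}(3;\C)$-left-invariant forms, with $\mathfrak{g} = \mathfrak{h}_3$. By Theorem \ref{thm:bc-invariant-open} the set of $J \in \mathcal{C}(\mathfrak{g})$ enjoying this property is open, so after shrinking $\varepsilon$ every $X_\tempo$ with $\tempo \in \Delta(\zero,\varepsilon)$ still has its Bott-Chern cohomology computed by invariant forms. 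This reduces the theorem to the cohomology of a single finite-dimensional double complex, generated by $\varphi^1_\tempo, \varphi^2_\tempo, \varphi^3_\tempo$ and their conjugates, whose differential depends on $\tempo$ only through the scalars $\sigma_{12}, \sigma_{1\bar1}, \sigma_{1\bar2}, \sigma_{2\bar1}, \sigma_{2\bar2}$ of the structure equations of \S\ref{sec:structure-equations-iwasawa}.

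Before computing I would use the symmetries to shorten the list of cases. Conjugation gives $h^{p,q}_{BC} = h^{q,p}_{BC}$, the Hodge-$*$-operator gives $h^{p,q}_{BC} = h^{3-p,3-q}_A$, and for the holomorphic corners $H^{p,0}_{BC} = \ker(\de|_{\wedge^{p,0}})$ and $H^{3,0}_{BC} \simeq H^{3,0}_{\delbar}$, so only a few independent bidegrees remain. For each such $(p,q)$ the recipe is uniform: write a general invariant $(p,q)$-form in the monomial basis, impose $\del u = 0$ and $\delbar u = 0$ to find $\ker\de \cap \wedge^{p,q}\mathfrak{g}_\C^*$, compute the image of $\del\delbar$ from $\wedge^{p-1,q-1}\mathfrak{g}_\C^*$ separately, and read off $h^{p,q}_{BC}$ as the difference of dimensions. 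Because the only nonvanishing structure constants live inside $\de\varphi^3_\tempo$, all these maps are sparse and their ranks reduce to the ranks of small matrices in the $\sigma_\bullet$.

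The mechanism behind the independence assertions is that $\sigma_{12} = -1 + \opiccolo{\tempo}$ never vanishes on $\Delta(\zero,\varepsilon)$. In each bidegree with $p+q$ odd and in $(1,1), (3,1), (1,3)$ the relevant ranks are forced by $\sigma_{12}\neq 0$ alone: for instance $\ker(\de|_{\wedge^{1,0}\mathfrak{g}_\C^*}) = \Span\{\varphi^1_\tempo, \varphi^2_\tempo\}$ because $\de\varphi^3_\tempo\neq 0$, while for $(3,1)$ the kernel is spanned by $\varphi^{123\bar1}_\tempo$ and $\varphi^{123\bar2}_\tempo$ (the class of $\varphi^{123\bar3}_\tempo$ dies since $\bar\sigma_{12}\neq 0$) and the image of $\del\delbar$ is zero, whence $h^{3,1}_{BC}=2$. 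Verifying case by case that, in exactly these bidegrees, the matrices built from $\sigma_{1\bar1}, \sigma_{1\bar2}, \sigma_{2\bar1}, \sigma_{2\bar2}$ either drop out of the rank count or enter only through $\sigma_{12}$, produces the constant values recorded in the statement.

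The main obstacle is the three even bidegrees $(2,0)$, $(0,2)$, $(2,2)$, where the finer invariants of $\tempo$ genuinely intervene. For $(2,0)$ one has $H^{2,0}_{BC} = \ker(\de|_{\wedge^{2,0}})$, and the closedness of $a\,\varphi^{12}_\tempo + b\,\varphi^{13}_\tempo + c\,\varphi^{23}_\tempo$ reduces to a homogeneous $2\times 2$ system in $(b,c)$ whose determinant equals $D(\tempo)$ up to the higher-order corrections of \eqref{eq:struttura-asintotica} and \eqref{eq:struttura-asintotica-ii}; its rank is $0$, $1$, $2$ according as $\tempo$ lies in class \emph{(i)} (all $\sigma_{i\bar j}$ vanishing), class \emph{(ii)} ($D(\tempo)=0$ but some $\sigma_{i\bar j}\neq0$) or class \emph{(iii)} ($D(\tempo)\neq0$), giving $h^{2,0}_{BC}\in\{3,2,1\}$. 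The heaviest case is $(2,2)$: here one must evaluate both $\ker(\de|_{\wedge^{2,2}\mathfrak{g}_\C^*})$ and $\imm(\del\delbar\colon\wedge^{1,1}\mathfrak{g}_\C^*\to\wedge^{2,2}\mathfrak{g}_\C^*)$, recognise the matrix $S$ as the object whose rank governs the surviving dimension, and --- most delicately --- keep exact account of the term $\del\delbar(\varphi^3_\tempo\wedge\bar\varphi^3_\tempo)$, which is a priori sensitive to $D(\tempo)$, in order to conclude that $h^{2,2}_{BC}$ depends only on $\rk S\in\{0,1,2\}$ and equals $8-\rk S$. Matching these ranks with the classes and subclasses through \eqref{eq:struttura-asintotica} and \eqref{eq:struttura-asintotica-ii} then completes the computation.
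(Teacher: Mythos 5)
Your proposal is correct and follows essentially the same route as the paper: reduction to the finite-dimensional invariant complex via Theorems \ref{thm:bc-invariant-cor} and \ref{thm:bc-invariant-open}, use of conjugation, the Hodge-$*$ duality with Aeppli cohomology and the identifications $H^{p,0}_{BC}=\ker\de\lfloor_{\wedge^{p,0}}$, $H^{3,0}_{BC}\simeq H^{3,0}_{\delbar}$ to cut down the cases, and then bidegree-by-bidegree rank computations of small matrices in $\sigma_{12},\sigma_{1\bar1},\sigma_{1\bar2},\sigma_{2\bar1},\sigma_{2\bar2}$, with $\sigma_{12}\neq 0$ forcing the deformation-invariant dimensions and $\rk S$ governing $h^{2,2}_{BC}=8-\rk S$. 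The only cosmetic difference is that you compute $\ker\de/\imm\del\delbar$ directly while the paper phrases the same linear algebra in terms of $\tilde\Delta_{BC}$-harmonic representatives.
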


\begin{rem}
 As a consequence of the computations below, we notice that the Bott-Chern cohomology allows to give a finer classification of the small deformations of $\I_3$ than the Dolbeault cohomology: indeed, note that $\dim_\C H^{2,2}_{BC}(X_\tempo)$ assumes different values according to different parameters in the same class {\itshape (ii)} or {\itshape (iii)}; in a sense, this says that the Bott-Chern cohomology ``carries more informations'' about the complex structure that the Dolbeault one. Note also that most of the dimensions of Bott-Chern cohomology groups are invariant under small deformations: this happens for example for the odd-degree Bott-Chern cohomology groups.
\end{rem}

\medskip

\subsubsection*{$1$-forms}\label{subsec:bott-chern-iwasawa-1}
It is straightforward to check that
$$ H^{1,0}_{BC}(X_{\tempo})\;=\:\C\left\langle \phit{1},\,\phit{2}\right\rangle \quad \text{ for } \quad \tempo\in\text{ classes {\itshape (i)}, {\itshape (ii)} and {\itshape (iii)}} \;. $$

\subsubsection*{$2$-forms}\label{subsec:bott-chern-iwasawa-2}
It is straightforward to compute
$$ H^{2,0}_{BC}(X_{\tempo}) \;=\; \C\left\langle \phit{12},\,\phit{13},\,\phit{23} \right\rangle \quad \text{ for }\quad \tempo\in\text{ class {\itshape (i)}} \;.$$
The computations for $H^{2,0}_{BC}(X_{\tempo})$ reduce to find $\psi=A\,\phit{12}+B\,\phit{13}+C\,\phit{23}$ where $A,\,B,\,C\in\C$ satisfy the linear system
$$
\left(
\begin{array}{ccc}
 0 & 0 & 0 \\
 0 & -\sigma_{2\bar1} & \sigma_{1\bar1} \\
 0 & -\sigma_{2\bar2} & \sigma_{1\bar2}
\end{array}
\right)
\cdot
\left(
\begin{array}{c}
 A \\
 B \\
 C
\end{array}
\right)
=
\left(
\begin{array}{c}
 0 \\
 0 \\
 0
\end{array}
\right) \;,
$$
whose matrix has rank $0$ for $\tempo\in\text{ class {\itshape (i)}}$, rank $1$ for $\tempo\in\text{ class {\itshape (ii)}}$ and rank $2$ for $\tempo\in\text{ class {\itshape (iii)}}$; so, in particular, we get that
$$ \dim_\C H^{2,0}_{BC}(X_{\tempo}) \;=\; 2 \quad \text{ for } \quad \tempo\in\text{ class {\itshape (ii)}} $$
and
$$ \dim_\C H^{2,0}_{BC}(X_{\tempo}) \;=\; 1 \quad \text{ for } \quad \tempo\in\text{ class {\itshape (iii)}} $$
(more precisely, for $\tempo\in\text{ class {\itshape (iii)}}$ we have $H^{2,0}_{BC}(X_{\tempo})=\C\left\langle \phit{12}\right\rangle$).\\
It remains to compute $H^{1,1}_{BC}(X_\tempo)$ for $\tempo\in\text{ classes {\itshape (i)}, {\itshape (ii)} and {\itshape (iii)}}$. First of all, it is easy to check that
$$ H^{1,1}_{BC}(X_{\tempo}) \;\supseteq\; \C\left\langle \phit{1\bar1},\,\phit{1\bar2},\,\phit{2\bar1},\,\phit{2\bar2} \right\rangle \quad \text{ for } \quad \tempo\in\text{ classes {\itshape (i)}, {\itshape (ii)} and {\itshape (iii)}} \;,$$
and equality holds if $\tempo\in\text{ class {\itshape (i)}}$, hence, in particular, if $\tempo=\zero$: this immediately implies that
$$ H^{1,1}_{BC}(X_{\tempo}) \;=\; \C\left\langle \phit{1\bar1},\,\phit{1\bar2},\,\phit{2\bar1},\,\phit{2\bar2} \right\rangle \quad \text{ for } \quad \tempo\in\text{ classes {\itshape (i)}, {\itshape (ii)} and {\itshape (iii)}} \;:$$
indeed, being $H^{1,1}_{BC}(X_\tempo)$ isomorphic to the kernel of the self-adjoint elliptic differential operator $\tilde\Delta_{BC_{J_\tempo}}$, the function $\tempo\mapsto \dim_\C H^{1,1}_{BC}(X_\tempo)$ is upper-semi-continuous at $0$. (One can explain this argument saying that the new parts appearing in the computations for $\tempo\neq \zero$ are ``too small'' to balance out the lack for the $\del$-closure or the $\delbar$-closure.) From another point of view, we can note that $(1,1)$-forms of the type $\psi=A\,\phit{1\bar3}+B\,\phit{2\bar3}+C\,\phit{3\bar1}+D\,\phit{3\bar2}$ are $\tilde\Delta_{BC_{J_\tempo}}$-harmonic if and only if $A,\,B,\,C,\,D\in\C$ satisfy the linear system
$$
\left(
\begin{array}{cccc}
 -\overline{\sigma_{12}} & 0 & -\sigma_{1\bar2} & -\sigma_{1\bar1} \\
 0 & -\overline{\sigma_{12}} & -\sigma_{2\bar2} & -\sigma_{2\bar1} \\
\hline
 \overline{\sigma_{1\bar2}} & -\overline{\sigma_{1\bar1}} & \sigma_{12} & 0 \\
 \overline{\sigma_{2\bar2}} & -\overline{\sigma_{2\bar1}} & 0 & \sigma_{12} 
\end{array}
\right)
\cdot
\left(
\begin{array}{c}
 A \\
 B \\
 C \\
 D
\end{array}
\right)
\;=\;
\left(
\begin{array}{c}
 0 \\
 0 \\
 0 \\
 0
\end{array}
\right) \;,
$$
whose matrix has rank $4$ for every $\tempo\in\text{ classes {\itshape (i)}, {\itshape (ii)} and {\itshape (iii)}}$.

\subsubsection*{$3$-forms}\label{subsec:bott-chern-iwasawa-3}
Since the special form of the structure equations, it is straightforward to compute
$$ H^{3,0}_{BC}(X_{\tempo})\;=\;\C \left\langle \phit{123} \right\rangle \quad \text{ for } \quad \tempo\in\text{ classes {\itshape (i)}, {\itshape (ii)} and {\itshape (iii)}} \;.$$
Moreover,
\begin{eqnarray*}
H^{2,1}_{BC}(X_{\tempo})&=&\C \left\langle \phit{12\bar1},\, \phit{12\bar2},\, \phit{13\bar1}-\frac{\sigma_{2\bar2}}{\overline{\sigma_{12}}}\,\phit{12\bar3},\, \phit{13\bar2}+\frac{\sigma_{2\bar1}}{\overline{\sigma_{12}}}\,\phit{12\bar3},\, \phit{23\bar1}+\frac{\sigma_{1\bar2}}{\overline{\sigma_{12}}}\,\phit{12\bar3},\right.\\[5pt]
&&\left. \phit{23\bar2}-\frac{\sigma_{1\bar1}}{\overline{\sigma_{12}}}\,\phit{12\bar3} \right\rangle \quad \text{ for } \quad \tempo\in\text{ classes {\itshape (i)}, {\itshape (ii)} and {\itshape (iii)}}\;;
\end{eqnarray*}
in particular,
$$ H^{2,1}_{BC}(X_{\tempo})\;=\;\C \left\langle \phit{12\bar1},\, \phit{12\bar2},\, \phit{13\bar1},\, \phit{13\bar2},\, \phit{23\bar1},\, \phit{23\bar2} \right\rangle  \quad \text{ for } \quad \tempo\in\text{ class {\itshape (i)}} \;.$$
From another point of view, one can easily check that
$$
H^{2,1}_{BC}(X_{\tempo}) \;\supseteq\; \C \left\langle \phit{12\bar1},\, \phit{12\bar2} \right\rangle \quad \text{ for } \quad \tempo\in\text{ classes {\itshape (i)}, {\itshape (ii)} and {\itshape (iii)}}
$$
and that the $(2,1)$-forms of the type $\psi=A\,\phit{12\bar3}+B\,\phit{13\bar1}+C\,\phit{13\bar2}+D\,\phit{23\bar1}+E\,\phit{23\bar2}$ are $\tilde\Delta_{BC_{J_\tempo}}$-harmonic if and only if $A,\,B,\,C,\,D,\,E\in\C$ satisfy the equation
$$
\left(
\begin{array}{ccccc}
 \overline{\sigma_{12}} & \sigma_{2\bar2} & -\sigma_{2\bar1} & \sigma_{1\bar2} & \sigma_{1\bar1}
\end{array}
\right)
\cdot
\left(
\begin{array}{c}
 A \\
 B \\
 C \\
 D \\
 E
\end{array}
\right)
\;=\;
 0 \;,
$$
whose matrix has rank $1$ for every $\tempo\in\text{ classes {\itshape (i)}, {\itshape (ii)} and {\itshape (iii)}}$. Note in particular that the dimensions of $H^{3,0}_{BC}(X_{\tempo})$ and of $H^{2,1}_{BC}(X_{\tempo})$ do not depend on $\tempo$.

\subsubsection*{$4$-forms}\label{subsec:bott-chern-iwasawa-4}
It is straightforward to compute
$$ H^{3,1}_{BC}(X_\tempo) \;=\; \C \left\langle \phit{123\bar1},\,\phit{123\bar2} \right\rangle \quad \text{ for } \quad \tempo\in\text{ classes {\itshape (i)}, {\itshape (ii)} and {\itshape (iii)}} $$
and
\begin{eqnarray*}
H^{2,2}_{BC}(X_\tempo) &=& \C\left\langle \phit{12\bar1\bar3},\, \phit{12\bar2\bar3},\, \phit{13\bar1\bar2},\, \phit{13\bar1\bar3},\, \phit{13\bar2\bar3},\, \phit{23\bar1\bar2},\right.\\[5pt]
&& \left.\phit{23\bar1\bar3},\, \phit{23\bar2\bar3} \right\rangle \quad \text{ for } \quad \tempo\in\text{ class {\itshape (i)}} \;.
\end{eqnarray*}
Moreover, one can check that
$$ H^{2,2}_{BC}(X_\tempo) \;\supseteq\; \C\left\langle \phit{12\bar1\bar3},\, \phit{12\bar2\bar3},\, \phit{13\bar1\bar2},\, \phit{23\bar1\bar2} \right\rangle \quad \text{ for } \quad \tempo\in\text{ classes {\itshape (i)}, {\itshape (ii)} and {\itshape (iii)}} \;.$$
For $H^{2,2}_{BC}(X_\tempo)$ with $\tempo\in\text{ class {\itshape (ii)} and {\itshape (iii)}}$, we get a new behavior: there are subclasses in both class {\itshape (ii)} and class {\itshape (iii)}, which can be distinguished by the dimension of $H^{2,2}_{BC}(X_\tempo)$. Indeed, consider $(2,2)$-forms of the type $\psi=A\,\phit{13\bar1\bar3}+B\,\phit{13\bar2\bar3}+C\,\phit{23\bar1\bar3}+D\,\phit{23\bar2\bar3}$; a straightforward computation shows that such a $\psi$ is $\tilde\Delta_{BC_{J_\tempo}}$-harmonic if and only if $A,\,B,\,C,\,D\in\C$ satisfy the linear system
$$
\left(
\begin{array}{cccc}
 \overline{\sigma_{2\bar2}} & -\overline{\sigma_{1\bar2}} & -\overline{\sigma_{2\bar1}} & \overline{\sigma_{1\bar1}} \\
 \sigma_{2\bar2} & -\sigma_{2\bar1} & -\sigma_{1\bar2} & \sigma_{1\bar1}
\end{array}
\right)
\cdot
\left(
\begin{array}{c}
 A \\
 B \\
 C \\
 D
\end{array}
\right)
=
\left(
\begin{array}{c}
 0 \\
 0
\end{array}
\right) \;:
$$
as one can easily note, the rank of the matrix involved is $0$ for $\tempo\in\text{ class {\itshape (i)}}$, while it is $1$ or $2$ depending on the values of the parameters and independently from $\tempo$ belonging to class {\itshape (ii)} or class {\itshape (iii)}. Therefore
$$ \dim_\C H^{2,2}_{BC}(X_\tempo) \;=\; 7 \quad \text{ for } \quad \tempo\in\text{ subclasses {\itshape (ii.a)} and {\itshape (iii.a)}} $$
and
$$ \dim_\C H^{2,2}_{BC}(X_\tempo) \;=\; 6 \quad \text{ for } \quad \tempo\in\text{ subclasses {\itshape (ii.b)} and {\itshape (iii.b)}} \;.$$

\subsubsection*{$5$-forms}\label{subsec:bott-chern-iwasawa-5}
Lastly, let us compute $H^{3,2}_{BC}(X_\tempo)$.\\
It is straightforward to check that
$$ H^{3,2}_{BC}(X_{\tempo}) \;=\; \C\left\langle \phit{123\bar1\bar2},\, \phit{123\bar1\bar3},\, \phit{123\bar2\bar3} \right\rangle \quad \text{ for } \quad \tempo\in\text{ classes {\itshape (i)}, {\itshape (ii)} and {\itshape (iii)}} \;:$$
in particular, it does not depend on $\tempo\in\Delta(\zero,\varepsilon)$.

\appendix

\begin{center}

\begin{landscape}
\section{Dimensions of the cohomologies of the Iwasawa manifold and of its small deformations}\label{app:chart}

\begin{center}
\begin{tabular}{c|ccccc}
$\mathbf{H^\bullet_{dR}}$ & $\mathbf{b_1}$ & $\mathbf{b_2}$ & $\mathbf{b_3}$ & $\mathbf{b_4}$ & $\mathbf{b_5}$ \\[5pt]
\hline
$\mathbb{I}_3$ and {\itshape (i)}, {\itshape (ii)}, {\itshape (iii)} & $4$ & $8$ & $10$ & $8$ & $4$
\end{tabular} 
\end{center}

\vspace{12pt}

\begin{center}
\begin{tabular*}{16cm}{c|cc|ccc|cccc|ccc|cc}
$\mathbf{H^{\bullet\bullet}_{\delbar}}$ & $\mathbf{h^{1,0}_{\delbar}}$ & $\mathbf{h^{0,1}_{\delbar}}$ & $\mathbf{h^{2,0}_{\delbar}}$ & $\mathbf{h^{1,1}_{\delbar}}$ & $\mathbf{h^{0,2}_{\delbar}}$ & $\mathbf{h^{3,0}_{\delbar}}$ & $\mathbf{h^{2,1}_{\delbar}}$ & $\mathbf{h^{1,2}_{\delbar}}$ & $\mathbf{h^{0,3}_{\delbar}}$ & $\mathbf{h^{3,1}_{\delbar}}$ & $\mathbf{h^{2,2}_{\delbar}}$ & $\mathbf{h^{1,3}_{\delbar}}$ & $\mathbf{h^{3,2}_{\delbar}}$ & $\mathbf{h^{2,3}_{\delbar}}$ \\[5pt]
\hline
$\I_3$ and \itshape{(i)} & $3$ & $2$ & $3$ & $6$ & $2$ & $1$ & $6$ & $6$ & $1$ & $2$ & $6$ & $3$ & $2$ & $3$ \\[5pt]
\itshape{(ii)} & $2$ & $2$ & $2$ & $5$ & $2$ & $1$ & $5$ & $5$ & $1$ & $2$ & $5$ & $2$ & $2$ & $2$ \\[5pt]
\itshape{(iii)} & $2$ & $2$ & $1$ & $5$ & $2$ & $1$ & $4$ & $4$ & $1$ & $2$ & $5$ & $1$ & $2$ & $2$
\end{tabular*}
\end{center}

\vspace{12pt}

\begin{center}
\begin{tabular*}{16cm}{c|cc|ccc|cccc|ccc|cc}
$\mathbf{H^{\bullet\bullet}_{\textrm{BC}}}$ & $\mathbf{h^{1,0}_{\textrm{BC}}}$ & $\mathbf{h^{0,1}_{\textrm{BC}}}$ & $\mathbf{h^{2,0}_{\textrm{BC}}}$ & $\mathbf{h^{1,1}_{\textrm{BC}}}$ & $\mathbf{h^{0,2}_{\textrm{BC}}}$ & $\mathbf{h^{3,0}_{\textrm{BC}}}$ & $\mathbf{h^{2,1}_{\textrm{BC}}}$ & $\mathbf{h^{1,2}_{\textrm{BC}}}$ & $\mathbf{h^{0,3}_{\textrm{BC}}}$ & $\mathbf{h^{3,1}_{\textrm{BC}}}$ & $\mathbf{h^{2,2}_{\textrm{BC}}}$ & $\mathbf{h^{1,3}_{\textrm{BC}}}$ & $\mathbf{h^{3,2}_{\textrm{BC}}}$ & $\mathbf{h^{2,3}_{\textrm{BC}}}$ \\[5pt]
\hline
$\I_3$ and \itshape{(i)} & $2$ & $2$ & $3$ & $4$ & $3$ & $1$ & $6$ & $6$ & $1$ & $2$ & $8$ & $2$ & $3$ & $3$ \\[5pt]
\itshape{(ii.a)} & $2$ & $2$ & $2$ & $4$ & $2$ & $1$ & $6$ & $6$ & $1$ & $2$ & $7$ & $2$ & $3$ & $3$ \\[5pt]
\itshape{(ii.b)} & $2$ & $2$ & $2$ & $4$ & $2$ & $1$ & $6$ & $6$ & $1$ & $2$ & $6$ & $2$ & $3$ & $3$ \\[5pt]
\itshape{(iii.a)} & $2$ & $2$ & $1$ & $4$ & $1$ & $1$ & $6$ & $6$ & $1$ & $2$ & $7$ & $2$ & $3$ & $3$ \\[5pt]
\itshape{(iii.b)} & $2$ & $2$ & $1$ & $4$ & $1$ & $1$ & $6$ & $6$ & $1$ & $2$ & $6$ & $2$ & $3$ & $3$
\end{tabular*}
\end{center}

\vspace{12pt}

\begin{center}
\begin{tabular*}{16cm}{c|cc|ccc|cccc|ccc|cc}
$\mathbf{H^{\bullet\bullet}_{\textrm{A}}}$ & $\mathbf{h^{1,0}_{\textrm{A}}}$ & $\mathbf{h^{0,1}_{\textrm{A}}}$ & $\mathbf{h^{2,0}_{\textrm{A}}}$ & $\mathbf{h^{1,1}_{\textrm{A}}}$ & $\mathbf{h^{0,2}_{\textrm{A}}}$ & $\mathbf{h^{3,0}_{\textrm{A}}}$ & $\mathbf{h^{2,1}_{\textrm{A}}}$ & $\mathbf{h^{1,2}_{\textrm{A}}}$ & $\mathbf{h^{0,3}_{\textrm{A}}}$ & $\mathbf{h^{3,1}_{\textrm{A}}}$ & $\mathbf{h^{2,2}_{\textrm{A}}}$ & $\mathbf{h^{1,3}_{\textrm{A}}}$ & $\mathbf{h^{3,2}_{\textrm{A}}}$ & $\mathbf{h^{2,3}_{\textrm{A}}}$ \\[5pt]
\hline
$\I_3$ and \itshape{(i)} & $3$ & $3$ & $2$ & $8$ & $2$ & $1$ & $6$ & $6$ & $1$ & $3$ & $4$ & $3$ & $2$ & $2$ \\[5pt]
\itshape{(ii.a)} & $3$ & $3$ & $2$ & $7$ & $2$ & $1$ & $6$ & $6$ & $1$ & $2$ & $4$ & $2$ & $2$ & $2$ \\[5pt]
\itshape{(ii.b)} & $3$ & $3$ & $2$ & $6$ & $2$ & $1$ & $6$ & $6$ & $1$ & $2$ & $4$ & $2$ & $2$ & $2$ \\[5pt]
\itshape{(iii.a)} & $3$ & $3$ & $2$ & $7$ & $2$ & $1$ & $6$ & $6$ & $1$ & $1$ & $4$ & $1$ & $2$ & $2$ \\[5pt]
\itshape{(iii.b)} & $3$ & $3$ & $2$ & $6$ & $2$ & $1$ & $6$ & $6$ & $1$ & $1$ & $4$ & $1$ & $2$ & $2$
\end{tabular*}
\end{center}

\end{landscape}

\end{center}

\end{document}